\newtheorem{theorem}{Theorem}
\theoremstyle{plain}
\newtheorem{lemma}{Lemma}
\newtheorem{proposition}{Proposition}
\newtheorem{remark}{Remark}
\numberwithin{equation}{section}
\def\d{\partial}
\def\1bar{\overline{1}}
\def\Im{\operatorname{Im}}
\def\Re{\operatorname{Re}}
\begin{document}
\title{An Obata-type Theorem in CR Geometry}
\author{Song-Ying Li}
\address{Department of Mathematics, University of California, Irvine, CA
92697; School of Mathematics and Computer Science, Fujian Normal University, Fuzhou, China}
\email{sli@math.uci.edu}
\author{Xiaodong Wang}
\address{Department of Mathematics, Michigan State University, East Lansing,
MI 48824}
\email{xwang@math.msu.edu}
\thanks{The second author was partially supported by NSF grant DMS-0905904.}
\begin{abstract}
We discuss a sharp lower bound for the first positive eigenvalue of the sublaplacian on a closed, strictly pseudoconvex pseudohermitian manifold of dimension $2m+1\geq 5$. We prove that the equality holds iff the manifold is equivalent to the CR sphere up to a scaling. For this purpose, we establish an Obata-type theorem in CR geometry which characterizes the CR sphere in terms of a nonzero function satisfying a certain overdetermined system. Similar results are proved in dimension 3 under an additional condition.

\end{abstract}
\maketitle

\section{Introduction\protect\bigskip}

In Riemannian geometry, estimates on the first positive eigenvalue of the
Laplace operator have played important roles and there have been many beautiful results.
We refer the reader to the books Chavel \cite{C} and Schoen-Yau \cite{SY}. The following theorem is a classic result.

\begin{theorem}
(Lichnerowicz-Obata) Let $\left( M^{n},g\right) $ be a closed Riemannian
manifold with $Ric\geq \left( n-1\right) \kappa $, where $\kappa $ is a
positive constant. Then the first positive eigenvalue of Laplacian satisfies%
\begin{equation*}
\lambda _{1}\geq n\kappa. \leqno(1.1)
\end{equation*}%
Moreover, equality holds iff $M$ is isometric to a round sphere.
\end{theorem}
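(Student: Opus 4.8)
\medskip
\noindent The plan is to run the standard Bochner argument for the estimate and then invoke Obata's rigidity analysis for the equality case. Let $f$ be an eigenfunction of the Laplacian with $\Delta f=-\lambda_1 f$, and recall the Bochner formula
\[
\tfrac12\Delta|\nabla f|^2=|\nabla^2 f|^2+\langle\nabla f,\nabla\Delta f\rangle+Ric(\nabla f,\nabla f).
\]
Integrating over the closed manifold $M$ annihilates the left-hand side. Using the pointwise inequality $|\nabla^2 f|^2\geq\tfrac1n(\Delta f)^2$ (Cauchy--Schwarz for the symmetric tensor $\nabla^2 f$ paired with $g$), the curvature hypothesis $Ric\geq(n-1)\kappa$, and the integrated identities $\int_M(\Delta f)^2=\lambda_1\int_M|\nabla f|^2$ and $\int_M|\nabla f|^2=\lambda_1\int_M f^2>0$, one is left with
\[
0\geq\Big(\tfrac{\lambda_1}{n}-\lambda_1+(n-1)\kappa\Big)\int_M|\nabla f|^2,
\]
which rearranges to $\lambda_1\geq n\kappa$, proving $(1.1)$.

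If $\lambda_1=n\kappa$, then every inequality in the argument above is forced to be an equality. In particular $|\nabla^2 f|^2=\tfrac1n(\Delta f)^2$ holds pointwise, so $\nabla^2 f=\tfrac1n(\Delta f)\,g=-\kappa f\,g$; that is, the nonconstant function $f$ satisfies the overdetermined Obata equation $\nabla^2 f+\kappa f\,g=0$. It then remains to show that a closed Riemannian manifold carrying such a function is isometric to a round sphere of constant curvature $\kappa$. This is Obata's theorem: after rescaling so that $\kappa=1$, along any unit-speed geodesic $\gamma$ the function $u(t)=f(\gamma(t))$ solves $u''+u=0$; one checks that $|\nabla f|^2+f^2$ is constant on $M$, normalizes it to $1$, and concludes that $f$ attains its extrema $\pm1$ at single points $p_\pm$, that $M\setminus\{p_\pm\}$ is foliated by the minimizing geodesics issuing from $p_+$, and that in geodesic polar coordinates about $p_+$ the metric takes the warped-product form $dr^2+\sin^2 r\,g_{S^{n-1}}$, i.e., $M$ is the unit round sphere. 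Conversely, on the round sphere a restricted linear coordinate function realizes equality, so the characterization is sharp.

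The estimate itself is essentially mechanical once the Bochner formula and the integration-by-parts identities (which use that $M$ is closed) are in place; the real content, and the main obstacle, is the rigidity statement. There the delicate points are showing that the extrema of $f$ are attained at isolated points and that the level hypersurfaces of $f$ are geodesic spheres of exactly the right radius -- this is precisely where Obata's geometric analysis does all the work, and it is the analogue of this step that the CR version in the body of the paper will have to establish anew.
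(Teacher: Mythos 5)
Your argument is correct and is exactly the classical Lichnerowicz--Obata proof: the integrated Bochner formula with the trace inequality $|\nabla^2 f|^2\geq\frac{1}{n}(\Delta f)^2$ gives the estimate, and equality forces the Obata equation $\nabla^2 f=-\kappa f\,g$, whence rigidity follows from Obata's theorem (stated in the paper as Theorem~\ref{OT}). The paper does not reprove this classical statement but only cites Lichnerowicz and Obata for it, so there is nothing to compare beyond noting that your route is the standard one and is precisely the template that the paper's CR argument in Sections 3--5 is designed to imitate.
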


The estimate $\lambda _{1}\geq n\kappa $ was proved by Lichnerowicz \cite{L} in 1958.
The characterization of the equality case was established by Obata \cite{O} in 1962. In fact, he deduced it from the 
following more general
\begin{theorem}\label{OT}
(Obata \cite{O}) Suppose $\left( N^{n},g\right) $ is a complete Riemannian manifold
and $u$ a smooth, nonzero function on $N$ satisfying $D^{2}u=-c^{2}ug$, then 
$N$ is isometric to a sphere $\mathbb{S}^{n}\left( c\right) $ of radius $1/c$
in the Euclidean space $\mathbb{R}^{n+1}$.
\end{theorem}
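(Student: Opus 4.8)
The plan is to normalize the equation, extract a first integral, read off the behaviour of $u$ along geodesics, and then set up geodesic polar coordinates centred at a maximum point of $u$ in which the metric is visibly that of a round sphere. After rescaling $g$ we may assume $c=1$, so the hypothesis reads $D^{2}u=-ug$; an isometry with $\mathbb{S}^{n}(1)$ then yields, after undoing the scaling, an isometry with $\mathbb{S}^{n}(c)$. Since $\nabla\big(|\nabla u|^{2}+u^{2}\big)=2\nabla_{\nabla u}\nabla u+2u\nabla u=-2u\nabla u+2u\nabla u=0$, the function $|\nabla u|^{2}+u^{2}$ is a constant, nonzero because $u\not\equiv 0$; multiplying $u$ by a constant we arrange $|\nabla u|^{2}+u^{2}\equiv 1$. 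Thus $-1\le u\le 1$ with $\nabla u=0$ exactly where $u=\pm 1$, and $u$ is nonconstant since a constant would give $0=D^{2}u=-ug$, hence $u\equiv 0$. Restricting the hypothesis to a unit-speed geodesic $\gamma$, the function $f(t)=u(\gamma(t))$ solves $f''=D^{2}u(\dot\gamma,\dot\gamma)=-f$, so $f(t)=u(\gamma(0))\cos t+\langle\nabla u(\gamma(0)),\dot\gamma(0)\rangle\sin t$, whose amplitude is $1$ precisely when $\dot\gamma(0)\parallel\nabla u(\gamma(0))$; taking $\gamma$ to issue from a point where $\nabla u\neq 0$ in the gradient direction shows $u$ attains the values $1$ and $-1$, at which points $\nabla u$ vanishes. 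Finally, tracing the hypothesis gives $\Delta u=-nu$, and the Bochner formula applied to $\tfrac12\Delta|\nabla u|^{2}=\tfrac12\Delta(1-u^{2})$ yields the pointwise identity $\operatorname{Ric}(\nabla u,\nabla u)=(n-1)|\nabla u|^{2}$, which will feed the comparison argument below.

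Next I fix $p$ with $u(p)=1$, so $\nabla u(p)=0$. By completeness every unit $v\in T_{p}N$ gives a geodesic $\gamma_{v}(t)=\exp_{p}(tv)$, defined for all $t$, with $u(\gamma_{v}(t))=\cos t$, a formula \emph{independent of $v$}. On the open dense set $\{-1<u<1\}$ put $e=\nabla u/|\nabla u|$. From $\nabla_{Y}\nabla u=-uY$ one checks that the integral curves of $e$ are unit-speed geodesics (they are the radial geodesics from $p$, suitably oriented) and that every level set $\{u=\mathrm{const}\}$ is totally umbilic: writing $u=\cos r$, its second fundamental form with respect to the unit normal pointing away from $p$ equals $\cot r$ times the induced metric. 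Consequently the geodesic spheres $S(p,r)=\{u=\cos r\}$ (for $r$ below the cut distance of $p$) have shape operator $\cot r\cdot\mathrm{Id}$; plugging this into the radial Riccati equation shows that all sectional curvatures of $2$-planes containing a radial direction are identically $1$ up to the cut locus of $p$. Equivalently, in geodesic polar coordinates the metric takes the warped-product form $g=dr^{2}+\sin^{2}r\,g_{\Sigma}$, where $(\Sigma,g_{\Sigma})$ is a fixed closed $(n-1)$-manifold.

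It remains to globalize and to recognize $\mathbb{S}^{n}$. Along $\gamma_{v}$ one has $\dot\gamma_{v}(t)=-\nabla u(\gamma_{v}(t))/|\nabla u(\gamma_{v}(t))|$ for $t\in(0,\pi)$, hence $\operatorname{Ric}(\dot\gamma_{v},\dot\gamma_{v})\equiv n-1$; combined with the Bonnet–Myers/Jacobi-field analysis and Hopf–Rinow, this shows each $\gamma_{v}$ is minimizing on $[0,\pi]$ with first conjugate point exactly at $t=\pi$, that $\exp_{p}$ maps the open ball $B(0,\pi)\subset T_{p}N$ diffeomorphically onto $N\setminus\{q\}$ where $q=\gamma_{v}(\pi)$ is a single point (the unique minimum of $u$), and that $N$ is compact, diffeomorphic to $\mathbb{S}^{n}$, with $r=d(p,\cdot)$ smooth on $N\setminus\{p,q\}$. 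One can also reach $q$ more directly from the warped product form, since the level sets $\{u=\cos r\}$ have intrinsic diameter $\le\sin r\cdot\operatorname{diam}(\Sigma,g_{\Sigma})\to 0$ as $r\to\pi^{-}$ and hence collapse. Now $g=dr^{2}+\sin^{2}r\,g_{\Sigma}$ on $(0,\pi)\times\Sigma$; since this metric extends smoothly over the pole $p$, the standard smooth-closure criterion forces $(\Sigma,g_{\Sigma})$ to be isometric to the unit round sphere $(\mathbb{S}^{n-1},g_{\mathrm{can}})$, so $g$ is the metric of constant curvature $1$ on $\mathbb{S}^{n}$. Undoing the rescaling identifies $(N,g)$ with $\mathbb{S}^{n}(c)$.

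The step I expect to be the main obstacle is the globalization in the third paragraph: passing from the pointwise/along-geodesics information to the facts that $N$ is compact, that the maximum and minimum of $u$ are each attained at a single point, and that $\exp_{p}$ is a diffeomorphism from $B(0,\pi)$ onto $N$ minus the antipodal point. This is exactly where completeness is genuinely used, and it rests on the conjugate-point/cut-locus analysis along the radial geodesics, fed by the Ricci identity $\operatorname{Ric}(\nabla u,\nabla u)=(n-1)|\nabla u|^{2}$ (together with the collapsing estimate for $\operatorname{diam}\{u=\cos r\}$). The only other ingredient of real content is the standard criterion forcing $(\Sigma,g_{\Sigma})$ to be the round $(n-1)$-sphere at the pole; the first integral and the totally-umbilic computation for the level sets are routine.
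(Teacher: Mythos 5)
This theorem is quoted from Obata's paper \cite{O} and is not proved in the text, so there is no in-paper argument to compare with; your proof is the classical one (first integral $|\nabla u|^{2}+u^{2}=\mathrm{const}$, $u=\cos r$ along geodesics from a maximum point $p$, umbilic level sets, warped-product form, roundness forced at the pole) and it is correct. The only thin spot is the globalization step you flag yourself, and it closes up with two observations worth recording. First, differentiating $\nabla_{Y}\nabla u=-uY$ once more gives the pointwise identity $R(X,Y)\nabla u=Y(u)X-X(u)Y$; since $\nabla u=-\sin t\,\dot\gamma_{v}$ along each radial geodesic, this shows every $2$-plane containing $\dot\gamma_{v}$ has sectional curvature $1$ for all $t$ (not just where polar coordinates are already known to be valid), so normal Jacobi fields solve $J''+J=0$ and the first conjugate point is exactly at $t=\pi$, while $\operatorname{Ric}(\dot\gamma_{v},\dot\gamma_{v})=n-1$ gives $d(p,\cdot)\le\pi$ by Myers. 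Second, $\rho=\arccos u$ satisfies $|\nabla\rho|\equiv 1$ and $\rho(\gamma_{v}(t))=t$, hence $\rho=d(p,\cdot)$; its smoothness on $\{-1<u<1\}$ places the cut locus of $p$ inside $\{u=-1\}$, and running the same identity from a minimum point $q$ gives $d(q,q')=\pi-d(p,q')=0$ for any other minimum $q'$, so the antipode is unique and $\exp_{p}$ is a diffeomorphism of $B(0,\pi)$ onto $N\setminus\{q\}$.
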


In CR geometry,   we have the most basic example of a second order differential
operator which is subelliptic, namely the sublaplacian $\Delta _{b}$.
On a closed pseudo-hermitian manifold, the sublaplacian $\Delta
_{b}$ still defines a selfadjoint operator with a discrete spectrum 
\begin{equation*}
\lambda _{0}=0<\lambda _{1}\leq \lambda _{2}\leq \cdots \leqno(1.2)
\end{equation*}%
with $\lim_{k\rightarrow \infty }\lambda _{k}=+\infty $. One would naturally hope that
the study of these eigenvalues in CR geometry will be as fruitful as in Riemannian geometry. An analogue of the Lichnerowicz estimate for the
sublaplacian on a strictly pseudoconvex pseudo-Hermitian manifold
 $(M^{2m+1}, \theta)$  was  proved by Greenleaf in \cite{G}  for $m\ge 3$ and by Li and Luk \cite{LL} for $m=2$. 
Later it was pointed out  that there was an error in the proof of the Bochner formula  in \cite{G}. Due to this error, the Bochner formula as well as the 
CR-Lichnerowicz theorem in \cite{G, LL} are not correctly formulated. 
The corrected statement is

\begin{theorem}
\label{Greenleaf}Let $(M,\theta)$ be a closed, strictly pseudoconvex pseudohermitian manifold of dimension 
$2m+1\geq 5$. Suppose that  the Webster pseudo Ricci curvature and the pseudo torsion satisfy %
\begin{equation*}
Ric\left( X,X\right) -\frac{m+1}{2}Tor\left( X,X\right) \geq \kappa
\left\vert X\right\vert ^{2} \leqno(1.3)
\end{equation*}
for all $X\in T^{1,0}\left( M\right) $, where $\kappa $ is a positive constant. Then the first positive eigenvalue of $%
-\Delta _{b}$ satisfies 
\begin{equation*}
\lambda _{1}\geq \frac{m}{m+1}\kappa. \leqno(1.4)
\end{equation*}
\end{theorem}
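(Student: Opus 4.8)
The plan is to follow the standard Bochner technique adapted to the pseudohermitian setting, which is the CR analogue of the Lichnerowicz argument. Let $u$ be an eigenfunction of $-\Delta_b$ with eigenvalue $\lambda_1$, so that $\Delta_b u = -\lambda_1 u$. One works with the CR Bochner formula for $|\nabla_b u|^2 = u_\alpha u^\alpha + u_{\bar\alpha} u^{\bar\alpha}$, integrated against $\theta\wedge(d\theta)^m$. The key identity, in the corrected form, expresses $\int_M |\nabla_b^2 u|^2$ (the full Hessian, including the $u_{\alpha\beta}$ and $u_{\alpha\bar\beta}$ components) in terms of $\int_M(\Delta_b u)^2$, the curvature/torsion terms $\int_M \left(Ric(\nabla_b u,\nabla_b u) - \tfrac{m+1}{2}Tor(\nabla_b u,\nabla_b u)\right)$, and a term involving the characteristic direction $T$, namely $\int_M |u_0|^2$ where $u_0 = Tu$. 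The subtlety that caused the error in \cite{G} is precisely the bookkeeping of these $T$-direction terms arising from commuting covariant derivatives.

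First I would fix a local frame $\{T, Z_\alpha, Z_{\bar\alpha}\}$ adapted to $\theta$ and record the commutation relations $u_{\alpha\beta} = u_{\beta\alpha}$, $u_{\alpha\bar\beta} - u_{\bar\beta\alpha} = i h_{\alpha\bar\beta} u_0$, together with the formulas for commuting a third derivative past the curvature and torsion tensors. Then I would compute $\tfrac12 \Delta_b |\nabla_b u|^2$ pointwise, collect terms, and integrate by parts. The decomposition of the Hessian into its trace-free symmetric part $(1,1)$-piece plus trace gives, via Cauchy–Schwarz, the inequality $\int_M |u_{\alpha\bar\beta}|^2 \ge \tfrac{1}{m}\int_M(\Delta_b u)^2 = \tfrac{1}{m}\lambda_1^2\int_M u^2$, while the $(2,0)$-piece $\int_M|u_{\alpha\beta}|^2 \ge 0$ is simply discarded (equality there will matter for the rigidity companion, but not here). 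For the $T$-direction term one uses the Kohn-type identity/integration by parts relating $\int_M |u_0|^2$ to lower-order quantities; this is where one must be careful, and the corrected formula yields a coefficient that combines cleanly with the eigenvalue term rather than contributing with the wrong sign.

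Putting the pieces together, integrating the Bochner formula over $M$ gives an identity of the schematic form
\begin{equation*}
0 = \int_M \left( |u_{\alpha\beta}|^2 + \Big|u_{\alpha\bar\beta} - \tfrac{\Delta_b u}{m} h_{\alpha\bar\beta}\Big|^2 \right) + \Big(\tfrac{m+1}{m}\lambda_1 - 1\Big)\int_M (\Delta_b u)^2 \cdot \tfrac{1}{\lambda_1}\cdot(\text{const}) - \int_M \Big(Ric(\nabla_b u,\nabla_b u) - \tfrac{m+1}{2}Tor(\nabla_b u,\nabla_b u)\Big),
\end{equation*}
where the precise constants must be tracked honestly. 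Feeding in the curvature hypothesis $(1.3)$, which bounds the last integral below by $\kappa\int_M|\nabla_b u|^2 = \kappa\lambda_1\int_M u^2$, and dropping the manifestly nonnegative Hessian terms, one is left with an inequality of the form $\big(\tfrac{m+1}{m}\lambda_1 - \kappa\big)\lambda_1\int_M u^2 \ge 0$, whence $\lambda_1 \ge \tfrac{m}{m+1}\kappa$. I expect the main obstacle to be exactly the point where \cite{G} went wrong: the correct accounting of all terms involving $u_0$ and the characteristic direction $T$ when commuting derivatives and integrating by parts, since an error there shifts coefficients in the Bochner formula and changes the final constant. Careful use of the pseudohermitian structure equations and the divergence theorem for $\Delta_b$ on a closed manifold is what makes this step go through.
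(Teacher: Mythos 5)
Your overall strategy --- integrate the (corrected) CR Bochner formula for $|\partial_b u|^2$, handle the characteristic-direction term by integral identities, apply Cauchy--Schwarz to the $(1,1)$-Hessian, and feed in hypothesis (1.3) --- is the same as the paper's. But the step you defer with ``the precise constants must be tracked honestly'' is exactly where the content of the proof lies, and as written your outline does not contain the idea that makes the constants work. The Bochner formula produces the torsion with coefficient $\frac{m}{2}$, whereas the hypothesis controls $Ric-\frac{m+1}{2}Tor$; the discrepancy of $\frac{1}{2}$ cannot be absorbed by signs or by discarding nonnegative terms. The paper resolves this by substituting for $\int\sqrt{-1}(u_{\overline{\beta}}u_{\beta,0}-u_{\beta}u_{\overline{\beta},0})$ a \emph{convex combination} of two distinct integral identities (Lemma \ref{intfor}), with weight $c$ chosen as $c=3m/(4m+2)$ so that the $|\sum u_{\alpha,\overline{\alpha}}|^2$ term (after the Cauchy--Schwarz step $\sum|u_{\alpha,\overline{\beta}}|^2\ge|\sum u_{\alpha,\overline{\alpha}}|^2/m$) is eliminated; one then checks that with this same $c$ the Ricci coefficient $1-\frac{2c}{m}$ and the torsion coefficient $\frac{m}{2}-1+c$ stand in exactly the ratio $1:\frac{m+1}{2}$, so the curvature hypothesis applies verbatim. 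Your proposal instead invokes a single ``Kohn-type identity'' for $\int|u_0|^2$, which by itself does not produce this alignment; this is precisely the kind of bookkeeping where Greenleaf's original argument went astray.

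Two further points. First, your schematic final identity is not a correct identity: the term $\left(\frac{m+1}{m}\lambda_1-1\right)\int(\Delta_b u)^2\cdot\frac{1}{\lambda_1}\cdot(\text{const})$ is not what the computation yields, and $\int|u_{\alpha\overline{\beta}}|^2\ge\frac{1}{m}\int(\Delta_b u)^2$ is off (with these conventions $\Delta_b u=2\Re\sum u_{\alpha,\overline{\alpha}}$, and it is the full complex quantity $|\sum u_{\alpha,\overline{\alpha}}|^2$, whose imaginary part carries $\frac{m}{2}u_0$, that must be retained). Second, the final inequality in the paper carries an overall factor $\frac{m-1}{2m+1}$, which is why the argument requires $m\ge 2$; your outline should account for where the dimension restriction enters. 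Without the weighted-combination device (or an equivalent mechanism producing the $\frac{m+1}{2}$ torsion coefficient), the proof as proposed does not close.
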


The estimate is sharp as one can verify that equality holds on the CR sphere
$$\mathbb{S}^{2m+1}=\{z\in \mathbb{C}^{m+1}:|z|=1\}$$ 
with the standard pseudohermitian structure
$$\theta_0=2\sqrt{-1}\overline{\partial}(|z|^2-1).$$
The natural question whether the equality case characterizes the CR sphere was
not addressed in \cite{G}. The torsion appearing in (1.3) is a major new obstacle 
comparing with the Riemanian case. This question has been recently studied by
several authors and partial results have been established. Chang and Chiu \cite{CC1}
proved that the equality case characterizes the CR\ sphere if $M$ has zero
torsion. In \cite{CW}, Chang and Wu proved the rigidity under the condition that the torsion satisfies certain identities involving its covariant derivatives.  
Ivanov and Vassilev \cite{IV} proved the same conclusion under the
condition that the divergence of the torsion is zero. 
Li and Tran \cite{LT} considered the special case that $M$ is a real ellipsoid $E(A)$ in $\mathbb{C}^{m+1}$.
They computed $\kappa$ explicitly and proved that the equality, $\lambda_1=m\kappa/(m+1)$  implies $E(A)$ is the sphere. 
 But in general it is very difficult to handle the torsion.  
 
 In this paper, we provide a new method which can handle the torsion
 and yields an affirmative answer to this question in the general case.

\begin{theorem}
\label{Main}If equality holds in Theorem \ref{Greenleaf}, then $(M,\theta)$ is equivalent to the
sphere $\mathbb{S}^{2m+1}$ with the standard pseudohermitian structure $\theta_0$ up to a scaling, i.e. there exists a CR diffeomorphism $F:M\rightarrow\mathbb{S}^{2m+1}$ such that
$F^*\theta_0=c\theta$ for some constant $c>0$.
\end{theorem}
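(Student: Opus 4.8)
\emph{Strategy.} We follow Obata's scheme: first show that equality in $(1.4)$ forces the extremal eigenfunction to solve a rigid overdetermined system, and then prove a CR Obata-type theorem asserting that a closed strictly pseudoconvex pseudohermitian manifold of dimension $\ge 5$ carrying a nonzero solution of that system is $(\mathbb{S}^{2m+1},c\,\theta_0)$ up to a CR diffeomorphism. The novelty must lie in running the second step with \emph{no a priori hypothesis on the pseudohermitian torsion} $A_{\alpha\beta}$.

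\emph{Step 1 (the overdetermined system).} Let $u\not\equiv 0$ with $\Delta_b u=-\lambda_1 u$, $\lambda_1=\tfrac{m}{m+1}\kappa$. Theorem~\ref{Greenleaf} is proved by integrating the corrected CR Bochner identity for $|\nabla_b u|^2$; after inserting the eigenvalue equation and $(1.3)$ it yields $\lambda_1\ge \tfrac{m}{m+1}\kappa$ with a nonnegative remainder built from $|u_{\alpha\beta}|^2$, the squared norm of the trace-free part of $u_{\alpha\bar\beta}$, and $\big(\mathrm{Ric}-\tfrac{m+1}{2}\mathrm{Tor}-\kappa\, h\big)(\nabla_b u,\nabla_b u)$. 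Equality forces each term to vanish, so pointwise on $M$
\begin{equation*}
u_{\alpha\beta}=0,\qquad u_{\alpha\bar\beta}=\Big(-\tfrac{\lambda_1}{2m}\,u+\tfrac{\sqrt{-1}}{2}\,u_0\Big)h_{\alpha\bar\beta},
\end{equation*}
together with the requirement that $\nabla_b u$ be a null direction of $\mathrm{Ric}-\tfrac{m+1}{2}\mathrm{Tor}-\kappa\, h$. Differentiating these relations and feeding in the pseudohermitian commutation formulas — the step where the torsion unavoidably appears and must be tracked carefully — should close up the system; in particular one expects to get $u_{0\alpha}=\sqrt{-1}\,\mu\,u_\alpha$ for a positive constant $\mu$, whence $T^2u=-\mu^2 u$ and $\Delta_b u_0=-\lambda_1 u_0$, so $u_0$ is again a first eigenfunction — the ``conjugate'' of $u$.

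\emph{Step 2 (the Obata-type theorem).} Let $E=\ker(\Delta_b+\lambda_1)$; by Step 1 every element solves the system, and the system bounds $\dim_{\mathbb{R}}E\le 2m+2$. For $u\in E$ put $f_u:=u-\tfrac{\sqrt{-1}}{\mu}\,Tu$. Using $u_{\alpha\beta}=0$ and $u_{0\bar\alpha}=-\sqrt{-1}\mu\,u_{\bar\alpha}$ one checks $(f_u)_{\bar\alpha}=0$, i.e. $f_u$ is a CR function with $Tf_u=\sqrt{-1}\mu\, f_u$ — the exact analogue of a linear coordinate $z_j$ on the sphere (indeed $f_{\Re z_j}=z_j$). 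Choosing a basis $u_1,\dots,u_N$ of $E$ gives CR functions $f_1,\dots,f_N$ and a CR map $\Phi=(f_1,\dots,f_N)\colon M\to\mathbb{C}^N$. A short computation with the system shows that $\rho:=\sum_j|f_j|^2$ is constant, so $\Phi$ maps $M$ into a round sphere; a further computation, using $Tf_j=\sqrt{-1}\mu f_j$ and $\partial_b\rho=0$, gives $\Phi^*\theta_0=c\,\theta$ for a positive constant $c$ (after normalizing, $\Phi\colon M\to\mathbb{S}^{2m+1}$). Since $\Phi^*\theta_0$ is then a contact form, $\Phi$ is a CR immersion; as $M$ is a compact connected $(2m+1)$-manifold, a dimension count forces $N=m+1$, the image is all of $\mathbb{S}^{2m+1}$, and $\Phi$ is a CR covering, hence — $\mathbb{S}^{2m+1}$ being simply connected — a CR diffeomorphism. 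Taking $F=\Phi$ proves the theorem.

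\emph{Where the difficulty lies.} The crux is the interaction with the torsion, and it enters at two points. In Step 1 the corrected Bochner identity is delicate — the error pointed out in \cite{G} shows how a single sign there invalidates the conclusion — and, because the torsion contributes only the weak pointwise statement that $\nabla_b u$ be a null direction of $\mathrm{Ric}-\tfrac{m+1}{2}\mathrm{Tor}-\kappa\, h$, one must argue that the overdetermined system is nevertheless strong enough to pin down how $A_{\alpha\beta}$ acts on the eigenfunctions (effectively obtaining $A_{\alpha\beta}u^\beta$ in closed form), so that the commutation computations in Steps 1–2 and the verification that $\rho$ is constant all go through. This is precisely the obstacle that forced earlier treatments to assume zero, parallel, or divergence-free torsion; removing it is the content of the theorem.
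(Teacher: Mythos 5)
Your overall architecture (equality forces an overdetermined system; an Obata-type rigidity statement then identifies the sphere) matches the paper, and your closing paragraph correctly diagnoses where the difficulty sits. But the proposal has a genuine gap exactly there: the torsion is never actually eliminated, and the relations you build Step 2 on are false until it is. Differentiating $u_{\alpha,\beta}=0$ and $u_{\alpha,\overline\beta}=(-\tfrac{1}{4}u+\tfrac{\sqrt{-1}}{2}u_0)\delta_{\alpha\beta}$ with the commutation formulas does \emph{not} give $u_{0,\alpha}=\sqrt{-1}\mu\,u_\alpha$; it gives $u_{0,\alpha}=2A_{\alpha\sigma}u_{\overline\sigma}+\tfrac{\sqrt{-1}}{2}u_\alpha$ together with the rank-one structure $A_{\alpha\beta}=\sqrt{-1}\psi\,u_\alpha u_\beta/|\partial_b u|^2$ off the critical set. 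So your "conjugate eigenfunction" relation, the CR-holomorphy of $f_u=u-\tfrac{\sqrt{-1}}{\mu}Tu$, and the constancy of $\rho=\sum|f_j|^2$ all presuppose $A_{\alpha\sigma}u_{\overline\sigma}=0$, i.e. $A\equiv 0$ — which is precisely the statement that has to be proved and which no amount of pointwise commutation algebra yields. Saying that "one must argue" this and that "removing it is the content of the theorem" acknowledges the gap without closing it.

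The paper's actual mechanism for killing the torsion is global and analytic, not algebraic: (i) a Hausdorff-dimension estimate showing the critical set $K=\{\partial_b u=0\}$ has dimension at most $n-2$, hence zero $2$-capacity, which licenses integration by parts on $M\setminus K$ where $\psi=|A|$ is smooth; (ii) applying the integral Bochner-type identity to $v=u_0$ to derive $\Re\sum\psi_\alpha u_{\overline\alpha}=0$; and (iii) an iterated integration by parts against weights $(\psi-\varepsilon)_+^2\psi\,u^{2k}$ with $k$ chosen large, producing a contradiction unless $\psi\equiv 0$. None of these ingredients appears in your outline. Separately, even granting $A=0$, your endgame differs from the paper's (which feeds $D^2u=-\tfrac{1}{4}u\,g_\theta$ into the classical Riemannian Obata theorem and then identifies the Reeb field as a Killing field, rather than building a CR embedding from CR functions); your version would additionally require you to establish that the eigenspace is large enough and that $\Phi^*\theta_0$ is a genuine contact form, neither of which is checked. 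As written, the proposal does not constitute a proof.
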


In fact our proof yields the following more general result  which can be viewed as the CR analogue of Theorem \ref{OT} (for notation see Section 2). 

\begin{theorem}\label{crot}
Let $M$ be a closed pseudohermitian manifold of dimension $2m+1\geq 5$.
Suppose there exists  a real nonzero function $u\in C^{\infty }\left(
M\right) $ satisfying 
\begin{eqnarray*}
u_{\alpha ,\beta } &=&0, \\
u_{\alpha ,\overline{\beta }} &=&\left( -\frac{\kappa}{2(m+1)}u+\frac{\sqrt{-1}}{2}%
u_{0}\right) h_{\alpha\overline{\beta }},
\end{eqnarray*}%
for some constant $\kappa>0$.
Then $M$ is equivalent to the sphere $\mathbb{S}^{2m+1}$ with the standard pseudohermitian structure up to a
scaling.
\end{theorem}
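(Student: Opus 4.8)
The strategy I would follow is to mimic Obata's original argument: produce from $u$ an explicit CR embedding of $M$ into a sphere, so first I need to manufacture enough functions satisfying the same overdetermined system. Set $c^2 = \kappa/(2(m+1))$ for brevity. The first step is to extract structural consequences of the two equations. Differentiating $u_{\alpha,\beta}=0$ and commuting covariant derivatives (using the pseudohermitian Ricci and torsion) should give relations tying $u_0$, the Hessian of $u$, and curvature/torsion together; in particular I expect to derive that $u_0$ itself satisfies a companion system and that the function $f := -c^2 u + \tfrac{\sqrt{-1}}{2}u_0$ has $f_{\alpha\bar\beta}$ proportional to $h_{\alpha\bar\beta}$ as well. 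The key algebraic miracle one hopes for (as in the zero-torsion case of Chang--Chiu) is that the torsion is forced to vanish, or at least that the span of $u$ together with its first derivatives and the real/imaginary parts of suitable first integrals is finite-dimensional and closed under the relevant operators. Concretely I would look at the real $(2m+2)$-dimensional (or $(2m+3)$-dimensional) space $V$ of functions $v$ satisfying $v_{\alpha,\beta}=0$, $v_{\alpha,\bar\beta} = (-c^2 v + \tfrac{\sqrt{-1}}{2} v_0) h_{\alpha\bar\beta}$, show $u \in V$, and show $\dim V$ is exactly what it is on the model.

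Second, I would build the map. On $\mathbb{S}^{2m+1}\subset\mathbb{C}^{m+1}$ the restrictions of the ambient linear coordinates $z_1,\dots,z_{m+1}$ and the constants span precisely the (complex) solution space of the analogous system, so the plan is: pick a basis of solutions, assemble them into a map $F:M\to\mathbb{C}^{m+1}$ (after normalizing the constant $\kappa$, equivalently $c$, to match the model by scaling $\theta$), and verify that $F$ lands in the sphere and is CR. Landing in the sphere amounts to showing $|F|^2$ is constant, which should come from a conserved-quantity computation: the equations force $\sum |F_j|^2$ to have vanishing derivatives along both the $T^{1,0}$ directions and the Reeb direction. That $F$ is CR, i.e. $F_{*}(T^{0,1}M)\subset T^{0,1}\mathbb{S}$, should be exactly the content of the equation $u_{\alpha,\beta}=0$ read for each coordinate function: it says each component is "CR-pluriharmonic" in the strong sense needed, so the differential of $F$ respects the CR structure. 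Then a dimension count plus properness (compactness of $M$) gives that $F$ is a covering onto the simply connected sphere, hence a CR diffeomorphism, and tracing through the normalization yields $F^*\theta_0 = c\,\theta$.

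Third, I would need to handle the nondegeneracy: that $F$ is actually an immersion, equivalently that the solutions don't degenerate at any point. Here the strict pseudoconvexity of $\theta$ (the Levi form $h_{\alpha\bar\beta}$ is positive definite) enters crucially — it is what makes the second equation a genuine pinning condition on the Hessian rather than a vacuous one — and a maximum-principle or connectedness argument on the zero set of $|dF|$ should close this gap. One also must check $u\not\equiv 0$ is genuinely used: at a critical point of $u$ the second equation forces the complex Hessian to be a nonzero multiple of $h$ (else $u$ and $u_0$ vanish to high order and, by a unique-continuation-type argument for this overdetermined system, $u\equiv 0$), which gives the local normal form of a round sphere near extrema of $u$.

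\textbf{Anticipated main obstacle.} The genuinely hard step is the first one: squeezing the torsion out of the system. In the Riemannian Obata theorem the single equation $D^2u=-c^2ug$ almost immediately yields constancy of $|\nabla u|^2 + c^2u^2$ and then the sphere; here the extra term $\tfrac{\sqrt{-1}}{2}u_0 h_{\alpha\bar\beta}$ and, lurking behind the commutation formulas, the pseudohermitian torsion $A_{\alpha\beta}$, obstruct the naive conserved quantity. I expect the crux to be a Bochner-type identity for $u$ (the same kind of computation underlying Theorem \ref{Greenleaf}) which, combined with the overdetermined equations and an integration by parts over the closed manifold $M$, forces $A_{\alpha\beta}\equiv 0$; once the torsion vanishes the problem reduces to the known zero-torsion situation and the embedding argument above goes through cleanly. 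The dimension restriction $2m+1\ge 5$ is presumably needed precisely in this Bochner/integration step, which is why the three-dimensional case requires the extra hypothesis mentioned in the abstract.
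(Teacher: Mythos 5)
Your overall architecture (derive structural identities, kill the torsion, then conclude via an Obata-type embedding) points in the right direction, but the proposal leaves the genuinely hard step unproved, and the mechanism you propose for it is known to be insufficient. You write that a Bochner-type identity combined with integration by parts over the closed manifold should force $A_{\alpha\beta}\equiv 0$. This is exactly what does \emph{not} work in general: the straightforward Bochner/integration-by-parts computation only yields $\int_M Q=\tfrac{\sqrt{-1}}{2}\int_M A_{\alpha\beta,\overline{\alpha}\overline{\beta}}u^2$ with $Q=\sqrt{-1}A_{\alpha\sigma}u_{\overline{\alpha}}u_{\overline{\sigma}}\le 0$, so it forces $A=0$ only under an extra hypothesis such as vanishing double divergence of the torsion (this is precisely the content of Remark \ref{aQ} and the scope of the earlier partial results of Chang--Chiu, Chang--Wu and Ivanov--Vassilev). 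The paper's actual proof of Lemma \ref{mainlem} requires several further ingredients you have not supplied: (i) the pointwise rank-one structure $A_{\alpha\beta}=\sqrt{-1}\psi\,u_\alpha u_\beta/|\partial_b u|^2$ with $\psi=|A|$ off the critical set $K=\{\partial_b u=0\}$; (ii) a Hausdorff-dimension (hence capacity) estimate on $K$, obtained by splitting $K$ according to whether $(u,u_0)$ vanishes and invoking the theory of singular nodal sets of elliptic equations, which is what legitimizes integrating by parts on $M\setminus K$; (iii) the identity $\Re\sum\psi_\alpha u_{\overline{\alpha}}=0$, proved by applying the integral Bochner-type formula to $v=u_0$ rather than to $u$; and (iv) a final iteration with the truncation $\psi(\psi-\varepsilon)_+^2$ against high powers $u^{2k}$, choosing $k$ large to absorb the divergence of $A$ and reach a contradiction. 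None of this is captured by "a Bochner identity plus integration by parts," so the crux of the theorem is missing.

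Two smaller points. First, once $A=0$ the paper does not build a CR embedding from a solution space $V$ of the overdetermined system as you propose; it observes that Proposition \ref{hessian} gives $D^2u=-\tfrac14 u\,g_\theta$ for the Riemannian Hessian and invokes the classical Obata theorem, then identifies $\theta$ by showing the Reeb field is Killing and diagonalizing it. Your embedding route would additionally require producing a full basis of solutions on $M$ (a priori you only have the single function $u$), which again runs into the torsion: $u_0$ satisfies the companion system only after $A=0$ is known. Second, the dimension restriction $2m+1\ge 5$ does not enter through the Bochner/integration step as you guess; it is needed to derive $u_{0,\alpha}=2A_{\alpha\sigma}u_{\overline{\sigma}}+\tfrac{\sqrt{-1}}{2}u_\alpha$ from the hypotheses, since the identity $\bigl(f_{\overline{\gamma}}-\sqrt{-1}A^\sigma_{\overline{\gamma}}u_\sigma\bigr)\delta_{\alpha\beta}=\bigl(f_{\overline{\beta}}-\sqrt{-1}A^\sigma_{\overline{\beta}}u_\sigma\bigr)\delta_{\alpha\gamma}$ forces the bracket to vanish only when there are at least two indices; this is why the three-dimensional case requires (\ref{u0a}) as an extra hypothesis.
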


The 3-dimensional case is more subtle. It is not clear if these results are true in 3 dimensions. Partial results with additional conditions are discussed in the last section. 

The approach in \cite{CC1} is to consider a family of adapted Riemannian metrics and try to apply the Lichnerowiz-Obata theorem. This 
approach requires very complicated calculations to relate the various CR quantities and the corresponding Riemannian ones. In December 2010, the authors found 
a new approach working directly with the Riemannian Hessian of the eigenfunction. With this approach we generalized the Chang-Chiu result to show that rigidity holds provided the 
double divergence of the torsion vanishes (see Remark \ref{aQ} in Section4). 
In their preprint \cite{IV},  Ivanov and Vassilev found the same strategy independently and proved rigidity under the condition that the divergence of the torsion is zero.
But to solve the general case requires a new ingredient. We employ a delicate integration by parts argument which requires a good understanding of the
critical set of the eigenfunction. 

The paper is organized as follows. In Section 2, we review some basic facts
in CR geometry. In Section 3, following  the argument of Greenleaf we present the proof of Theorem 2
 with all the necessary corrections. Theorem 3 is proved in Sections 4 and 5.  Finally, in Section 6,
we discuss the $3$-dimensional case.

{\bf Acknowledgement.} The authors wish to thank the referees for carefully reading the paper and making valuable suggestions.

\section{Preliminaries}

Let $\left( M,\theta ,J\right) $ be a strictly pseudoconvex pseudohermitian
manifold of dimension $2m+1$. Thus $G_{\theta }=d\theta \left( \cdot ,J\cdot
\right) $ defines a Riemannian metric on the contact distribution $H\left(
M\right) =\ker \theta $. As usual, we set $T^{1,0}\left( M\right) =\{w-\sqrt{-1}J w : w\in H\left( M\right)
\}\subset T\left( M\right) \otimes 
\mathbb{C}
$ and $T^{0,1}\left( M\right) =\overline{T^{1,0}\left( M\right) }$.
Let $T$ be the Reeb vector field and extend $J$ to
an endomorphism $\phi $ on $TM$ by defining $\phi \left( T\right) =0$. We
have a natural Riemannian metric $g_{\theta}$ on $M$ such that $TM=\mathbb{R%
}T\oplus H\left( M\right) $ is an orthogonal decomposition and $g_{\theta
}\left( T,T\right) =1$. In the following, we will simply denote $g_{\theta }$
by $\left\langle \cdot ,\cdot \right\rangle $. Let $\widetilde{\nabla }$ be
the Levi-Civita connection of $g_{\theta }$ while $\nabla $ is the
Tanaka-Webster connection. For basic facts on CR geometry, one can consult the recent book \cite{DT} or the 
original papers by Tanaka \cite{T} and Webster \cite{W}.

Recall that the Tanaka-Webster connection is compatible with the metric  $g_{\theta }$, but it has a non-trivial torsion.
 The torsion $\tau $ satisfies%
\begin{eqnarray*}
\tau \left( Z,W\right) &=&0, \\
\tau \left( Z,\overline{W}\right) &=&\omega \left( Z,\overline{W}\right) T,
\\
\tau \left( T,J\cdot \right) &=&-J\tau \left( T,\cdot \right)
\end{eqnarray*}%
for any $Z,W\in T^{1,0}\left( M\right) $, where $\omega=d\theta$.
We define $A:T\left( M\right) \rightarrow T\left( M\right) $ by $AX=\tau
\left( T,X\right) $. It is customary to simply call $A$ the torsion of the CR manifold.  It is easy to see that $A$ is symmetric.
Moreover $AT=0,AH\left( M\right) \subset
H\left( M\right) $ and $A\phi X=-\phi AX$.

The following formula gives the difference between the two connections $%
\widetilde{\nabla }$ and $\nabla $. The proof is based on straightforward
calculation and can be found in \cite{DT}.

\thinspace

\begin{proposition}
\label{compare}We have%
\begin{eqnarray*}
\widetilde{\nabla }_{X}Y &=&\nabla _{X}Y+\theta \left( Y\right) AX+\frac{1}{2%
}\left( \theta \left( Y\right) \phi X+\theta \left( X\right) \phi Y\right) \\
&&-\left[ \left\langle AX,Y\right\rangle +\frac{1}{2}\omega \left(
X,Y\right) \right] T.
\end{eqnarray*}
\end{proposition}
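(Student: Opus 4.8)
The plan is to verify that the expression on the right-hand side, which I will denote $\overline{\nabla}_X Y$, defines a torsion-free connection compatible with the metric $g_\theta$; by the uniqueness of the Levi-Civita connection it must then coincide with $\widetilde{\nabla}$. Write $\overline{\nabla}_X Y=\nabla_X Y+S(X,Y)$ with
$$S(X,Y)=\theta(Y)AX+\frac{1}{2}\bigl(\theta(Y)\phi X+\theta(X)\phi Y\bigr)-\Bigl[\langle AX,Y\rangle+\frac{1}{2}\omega(X,Y)\Bigr]T.$$
Every term of $S$ is assembled from the tensors $\theta,\phi,A,\omega,g_\theta$ and the vector field $T$, so $S$ is a genuine $(1,2)$-tensor field; hence $\overline{\nabla}$ is a linear connection, inheriting $C^\infty(M)$-linearity in $X$ and the Leibniz rule in $Y$ from $\nabla$.

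For compatibility with $g_\theta$ I would use that $\nabla$ is a metric connection, so that $X\langle Y,Z\rangle=\langle\nabla_X Y,Z\rangle+\langle Y,\nabla_X Z\rangle$ and the claim reduces to showing that $S(X,\cdot)$ is skew-adjoint, i.e. $\langle S(X,Y),Z\rangle+\langle S(X,Z),Y\rangle=0$. Expanding this sum and using the elementary identities $\langle T,X\rangle=\theta(X)$ and $\langle\phi X,Y\rangle=\omega(X,Y)$ (the latter following from $G_\theta=d\theta(\cdot,J\cdot)$, $\phi^2=-\mathrm{id}$ on $H(M)$, and the skew-adjointness of $\phi$), together with the skew-adjointness of $\phi$, one checks that the $A$-terms cancel in pairs, the $\langle\phi X,\cdot\rangle$-terms cancel against the $\omega(X,\cdot)$-terms, and the leftover $\tfrac{1}{2}\theta(X)\bigl(\langle\phi Y,Z\rangle+\langle\phi Z,Y\rangle\bigr)$ vanishes.

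For vanishing torsion I would first record the torsion of the Tanaka–Webster connection in closed form. Decomposing arbitrary vector fields into Reeb and horizontal parts, and the horizontal parts further into $(1,0)$ and $(0,1)$ components, and feeding in the listed properties of $\tau$ along with $AT=0$, $d\theta(T,\cdot)=0$ and $d\theta|_{T^{1,0}\times T^{1,0}}=0$, one obtains
$$\nabla_X Y-\nabla_Y X-[X,Y]=\tau(X,Y)=\omega(X,Y)T+\theta(X)AY-\theta(Y)AX.$$
Then $\overline{\nabla}_X Y-\overline{\nabla}_Y X-[X,Y]=\tau(X,Y)+S(X,Y)-S(Y,X)$, and a short computation using the symmetry of $A$ and again $\langle\phi X,Y\rangle=\omega(X,Y)$ gives $S(X,Y)-S(Y,X)=-\tau(X,Y)$, so $\overline{\nabla}$ is symmetric. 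Having shown that $\overline{\nabla}$ is a torsion-free metric connection, uniqueness yields $\overline{\nabla}=\widetilde{\nabla}$, which is the asserted identity.

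The whole argument is essentially bookkeeping; the only step needing a little care is extracting the explicit form of $\tau$ from its three defining relations, after which everything follows from repeated use of the symmetry of $A$ and the identity $\langle\phi X,Y\rangle=\omega(X,Y)$. Equivalently, one could substitute the right-hand side into Koszul's formula for $\widetilde{\nabla}$ and verify the identity directly, but the route above keeps the role of the pseudohermitian torsion most transparent.
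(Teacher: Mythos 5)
Your argument is correct, and it is essentially the standard proof: the paper itself omits the verification, calling it a "straightforward calculation" and citing \cite{DT}, where the identity is established in just this way. Your two key ingredients --- the closed form $\tau(X,Y)=\omega(X,Y)T+\theta(X)AY-\theta(Y)AX$ of the Tanaka--Webster torsion and the identity $\langle\phi X,Y\rangle=\omega(X,Y)$ --- both check out with the paper's conventions, so the uniqueness-of-the-Levi-Civita-connection route goes through as you describe.
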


\begin{remark}\label{one}
\bigskip We have 
\begin{equation*}
\widetilde{\nabla }_{X}T=AX+\frac{1}{2}\phi \left( X\right) .
\end{equation*}%
In particular,  $\widetilde{\nabla }_{T}T=0$. If $X$ and $Y$ are both
horizontal i.e. $X,Y\in H(M)$, then 
\begin{equation*}
\widetilde{\nabla }_{X}Y=\nabla _{X}Y-\left[ \left\langle AX,Y\right\rangle +%
\frac{1}{2}\omega \left( X,Y\right) \right] T.
\end{equation*}
\end{remark}

In the following, we will always work with a local frame $\{T_{\alpha
}:\alpha =1,\cdots ,m\}$ for $T^{1,0}\left( M\right) $. Then $\{T_{\alpha
},T_{\overline{\alpha }}=\overline{T_{\alpha }},T_{0}=T\}$ is a local frame
for $T\left( M\right) \otimes 
\mathbb{C}
$. Let $h_{\alpha \overline{\beta }}=-i\omega \left( T_{\alpha },T_{%
\overline{\beta }}\right) =g_{\theta }\left( T_{\alpha },T_{\overline{\beta }%
}\right) $ be the components of the Levi form. For a smooth function $u$ on $%
M$, we will use notations such as $u_{\alpha ,\overline{\beta }}$ to denoted
its covariant derivatives with respect to  the Tanaka-Webster connection $\nabla $.
Let $D^{2}u$ be the Hessian of $u$ with respect to the Riemannian metric $%
g_{\theta }$.

By straightforward calculation using Proposition \ref{compare}, one can
derive

\begin{proposition}
\label{hessian}We have the following formulas%
\begin{eqnarray*}
&&D^{2}u\left( T,T\right) =u_{0,0}, \\
&&D^{2}u\left( T,T_{\alpha }\right) =u_{\alpha ,0}-\frac{\sqrt{-1}}{2}%
u_{\alpha }, \\
&&D^{2}u\left( T_{\alpha },T_{\beta }\right) =u_{\alpha,\beta }+A_{\alpha \beta
}u_{0}, \\
&&D^{2}u\left( T_{\alpha },T_{\overline{\beta }}\right) =u_{\alpha ,%
\overline{\beta }}-\frac{\sqrt{-1}}{2}h_{\alpha \overline{\beta }}u_{0}.
\end{eqnarray*}
\end{proposition}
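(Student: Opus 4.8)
The plan is to prove all four identities by the direct computation alluded to in the text, comparing the Levi--Civita connection $\widetilde{\nabla}$ with the Tanaka--Webster connection $\nabla$. For any (complex) vector fields $X,Y$ the Riemannian Hessian is $D^2u(X,Y)=X(Yu)-(\widetilde{\nabla}_XY)u$, while $X(Yu)-(\nabla_XY)u$ is the corresponding quantity for the Tanaka--Webster connection, whose frame components are exactly the $u_{\alpha,\beta}$, $u_{\alpha,\overline{\beta}}$, $u_{\alpha,0}$, $u_{0,0}$ in the statement. Subtracting these two and inserting Proposition \ref{compare} gives, for all $X,Y$,
\begin{align*}
D^2u(X,Y)=X(Yu)-(\nabla_XY)u-\theta(Y)(AX)u&-\tfrac12\bigl(\theta(Y)(\phi X)u+\theta(X)(\phi Y)u\bigr)\\
&+\bigl(\langle AX,Y\rangle+\tfrac12\omega(X,Y)\bigr)u_0,
\end{align*}
with $u_0=Tu$. (Equivalently one may just quote Remark \ref{one}: $\widetilde{\nabla}_XT=AX+\tfrac12\phi X$ takes care of the entries involving $T$, and $\widetilde{\nabla}_XY=\nabla_XY-[\langle AX,Y\rangle+\tfrac12\omega(X,Y)]T$ of the entries with $X,Y$ both horizontal.)

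Next I would evaluate this identity on the frame $\{T,T_\alpha,T_{\overline{\beta}}\}$, using the elementary facts: $\theta(T)=1$ and $\theta$ annihilates $H(M)\otimes\mathbb{C}$; $\phi T=0$ and $\phi T_\alpha=\sqrt{-1}\,T_\alpha$; $AT=0$ and $A$ interchanges $T^{1,0}$ and $T^{0,1}$, so $\langle AT_\alpha,T_\beta\rangle=A_{\alpha\beta}$ while $\langle AT_\alpha,T_{\overline{\beta}}\rangle=0$ (as $g_\theta$ vanishes on $T^{0,1}\times T^{0,1}$); and $\iota_T\omega=0$, $\omega(T_\alpha,T_\beta)=0$, $\omega(T_\alpha,T_{\overline{\beta}})=\sqrt{-1}\,h_{\alpha\overline{\beta}}$, the last from the normalization $h_{\alpha\overline{\beta}}=-\sqrt{-1}\,\omega(T_\alpha,T_{\overline{\beta}})$. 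For $X=Y=T$ every correction term vanishes, giving $u_{0,0}$; for $(X,Y)=(T,T_\alpha)$ only $\tfrac12\theta(T)(\phi T_\alpha)u=\tfrac{\sqrt{-1}}{2}u_\alpha$ survives; for $(T_\alpha,T_\beta)$ only $\langle AT_\alpha,T_\beta\rangle u_0=A_{\alpha\beta}u_0$ survives; for $(T_\alpha,T_{\overline{\beta}})$ only $\tfrac12\omega(T_\alpha,T_{\overline{\beta}})u_0=\tfrac{\sqrt{-1}}{2}h_{\alpha\overline{\beta}}u_0$ survives.

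The final step is to rewrite $X(Yu)-(\nabla_XY)u$ in the indexed notation, where one must remember that $\nabla$ has torsion and hence the order of the two covariant differentiations matters. Because $\tau(T_\alpha,T_\beta)=0$, the $(1,0)$--$(1,0)$ second covariant derivative is symmetric and the third formula comes out directly as $u_{\alpha,\beta}+A_{\alpha\beta}u_0$; but $\tau(T_\alpha,T_{\overline{\beta}})=\omega(T_\alpha,T_{\overline{\beta}})T$, so reordering the two derivatives in the $(1,0)$--$(0,1)$ term produces an extra $-\sqrt{-1}\,h_{\alpha\overline{\beta}}u_0$, which combines with the $+\tfrac{\sqrt{-1}}{2}h_{\alpha\overline{\beta}}u_0$ above to give the stated $-\tfrac{\sqrt{-1}}{2}h_{\alpha\overline{\beta}}u_0$; the entries with $T$ need no such reordering. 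This is all bookkeeping, and there is no genuine obstacle; the one place demanding care is keeping the Tanaka--Webster torsion straight, both inside Proposition \ref{compare} and when passing between $D^2u$ and the indexed derivatives, since a sign slip there is precisely what would make the last two formulas clash with the symmetry of $D^2u$.
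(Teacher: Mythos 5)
Your computation is correct and is precisely the ``straightforward calculation using Proposition \ref{compare}'' that the paper invokes without writing out: expand $D^2u(X,Y)=X(Yu)-(\widetilde{\nabla}_XY)u$ via the connection comparison, evaluate on the frame, and use the commutation relations of Proposition \ref{rule} to put the mixed second derivatives in the stated order. All the signs and torsion bookkeeping in your argument check out against the paper's conventions.
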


In doing calculations we will need to use repeatedly the following formulas
which can be found in \cite{DT} or \cite{Lee}.

\begin{proposition}
\label{rule}We have the following formulas%
\begin{eqnarray*}
u_{0,\alpha } &=&u_{\alpha,0}+A_{\alpha }^{\overline{\beta }}u_{\overline{\beta }%
}, \\
u_{\alpha,\beta }&=&u_{\beta ,\alpha }, \\
u_{\alpha ,\overline{\beta }} &=&u_{\overline{\beta },\alpha }+\sqrt{-1}%
h_{\alpha \overline{\beta }}u_{0}, \\
u_{\alpha ,0\beta } &=&u_{\alpha ,\beta 0}+A_{\beta }^{\overline{\gamma }%
}u_{\alpha ,\overline{\gamma }}+R_{\beta 0\alpha }^{\sigma }u_{\sigma } \\
&=&u_{\alpha ,\beta 0}+A_{\beta }^{\overline{\gamma }}u_{\alpha ,\overline{%
\gamma }}-A_{\alpha \beta ,\overline{\gamma }}h^{\sigma \overline{\gamma }%
}u_{\sigma }, \\
u_{\alpha ,0\overline{\beta }} &=&u_{\alpha ,\overline{\beta }0}+u_{a,\gamma
}h^{\gamma \overline{\nu }}A_{\overline{\nu }\overline{\beta }}+h^{\gamma 
\overline{\nu }}A_{\overline{\nu }\overline{\beta },\alpha }u_{\gamma }, \\
u_{\alpha ,\overline{\beta }\overline{\gamma }} &=&u_{\alpha ,\overline{%
\gamma }\overline{\beta }}+\sqrt{-1}\left( h_{\alpha \overline{\beta }}A_{%
\overline{\gamma }}^{\sigma }-h_{\alpha \overline{\gamma }}A_{\overline{%
\beta }}^{\sigma }\right) u_{\sigma }, \\
u_{\alpha ,\beta \overline{\gamma }} &=&u_{\alpha ,\overline{\gamma }\beta }+%
\sqrt{-1}h_{\beta \overline{\gamma }}u_{\alpha ,0}-R_{\beta \overline{\gamma 
}\alpha }^{\sigma }u_{\sigma }, \\
u_{\alpha ,\beta \gamma } &=&u_{\alpha ,\gamma \beta }-R_{\beta \gamma
\alpha }^{\sigma }u_{\sigma } \\
&=&u_{\alpha ,\gamma \beta }+\sqrt{-1}\left( A_{\alpha \gamma }u_{\beta
}-A_{\alpha \beta }u_{\gamma }\right) .
\end{eqnarray*}
\end{proposition}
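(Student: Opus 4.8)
The plan is to reduce the statement to Obata's theorem (Theorem \ref{OT}) for the Riemannian metric $g_\theta$. Rescaling $\theta$ (which changes $h_{\alpha\overline\beta}$, $u_0$, $\kappa$ and the covariant derivatives in a controlled way but leaves the CR structure untouched) we may normalize as convenient; set $c^2:=\kappa/(2(m+1))$, so the second hypothesis reads $u_{\alpha,\overline\beta}=(-c^2u+\tfrac{\sqrt{-1}}2u_0)h_{\alpha\overline\beta}$. Contracting with $h^{\alpha\overline\beta}$ and using Proposition \ref{rule} gives $-\Delta_b u=\tfrac{m\kappa}{m+1}u$, so $u$ is a first eigenfunction realizing equality in Theorem \ref{Greenleaf}. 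By Proposition \ref{hessian} the hypotheses are equivalent to
\begin{equation*}
D^2u(T_\alpha,T_{\overline\beta})=-c^2u\,h_{\alpha\overline\beta},\qquad D^2u(T_\alpha,T_\beta)=A_{\alpha\beta}u_0,
\end{equation*}
together with $D^2u(T,T)=u_{0,0}$ and $D^2u(T,T_\alpha)=u_{\alpha,0}-\tfrac{\sqrt{-1}}2u_\alpha$. Hence it suffices to establish the three identities $A_{\alpha\beta}u_0=0$, $u_{\alpha,0}=\tfrac{\sqrt{-1}}2u_\alpha$, $u_{0,0}=-c^2u$, for then $D^2u=-c^2u\,g_\theta$ on all of $M$ and Theorem \ref{OT} provides an isometry onto $\mathbb S^{2m+1}(1/c)$.

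As a first step I would differentiate the overdetermined system and feed the result into the commutation formulas of Proposition \ref{rule}. Differentiating $u_{\alpha,\beta}=0$ gives $u_{\alpha,\beta\gamma}=0=u_{\alpha,\gamma\beta}$, and the last identity of Proposition \ref{rule} then forces the pointwise relation
\begin{equation*}
A_{\alpha\gamma}u_\beta=A_{\alpha\beta}u_\gamma\qquad\text{on }M.
\end{equation*}
Using this in both index slots, on the open set $U:=\{x\in M:\ u_\alpha(x)\neq0\text{ for some }\alpha\}$ the torsion is rank one, $A_{\alpha\beta}=\mu\,u_\alpha u_\beta$ for a complex function $\mu$ on $U$, and $|A|^2=|\mu|^2|\partial_b u|^4$. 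Moreover, the two given equations together with the commutation rule $u_{\alpha,\overline\beta}=u_{\overline\beta,\alpha}+\sqrt{-1}h_{\alpha\overline\beta}u_0$ say that the $(1,0)$-gradient $V:=u^\alpha T_\alpha$ satisfies $\nabla_{T_{\overline\beta}}V=0$ and $\nabla_{T_\beta}V=(-c^2u-\tfrac{\sqrt{-1}}2u_0)T_\beta$; differentiating the system in the Reeb direction and commuting shows that $u_0$ satisfies a system of the same shape as $u$ up to explicit torsion corrections, and produces identities expressing $u_{\alpha,0}$, $u_{0,\alpha}$ and $u_{0,0}$ in terms of $u$, $u_0$ and contractions of $A$ and $\nabla A$. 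On $U$, substituting $A_{\alpha\beta}=\mu u_\alpha u_\beta$ and $\nabla A$ (computed using $u_{\alpha,\beta}=0$) collapses all of these into scalar identities for $\mu$, $u$, $u_0$ and $|\partial_b u|^2$.

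The heart of the matter, and where I expect the main obstacle, is to prove $\mu\equiv 0$, i.e. $A\equiv0$ — this is the genuinely new difficulty created by the torsion. The strategy I would pursue is an integration-by-parts (divergence) identity: combine the scalar equations of the previous paragraph into a relation of the form $\operatorname{div}(\text{vector field built from }u,u_0,A)=|A|^2\,w+(\text{sign-definite terms})$ with a nonnegative weight $w$ (a power of $|\partial_b u|$ times a power of $u_0$ or $u$), and integrate over $M$ to get $\int_M|A|^2w\le0$, hence $A\equiv0$ on $\{w>0\}$ and, by continuity, everywhere. The delicate point is that $w$ degenerates on the critical set $\Sigma:=\{du=0\}$ while $\mu$ and $\nabla A$ a priori have singularities there, so the divergence theorem is valid only on $M\setminus\Sigma$; one therefore needs a precise description of $\Sigma$. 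Using that $u$ is a $\Delta_b$-eigenfunction together with the overdetermined system (which pins down the $2$-jet of $u$), I would show $\Sigma$ is a disjoint union of finitely many closed smooth submanifolds of codimension $\ge2$ along which $u$ vanishes to bounded order, so that a cutoff/exhaustion argument around $\Sigma$ forces the error terms to $0$ and the global identity holds. This is presumably where the dimension hypothesis $m\ge2$ enters (for instance in the rank argument above, which degenerates when $m=1$), consistent with the $3$-dimensional case being left open.

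Once $A\equiv0$, the manifold is Sasakian and all torsion terms in Proposition \ref{rule} disappear; re-running the differentiation of the now torsion-free system yields the remaining two identities $u_{\alpha,0}=\tfrac{\sqrt{-1}}2u_\alpha$ and $u_{0,0}=-c^2u$ — this is in essence the torsion-free computation of Chang--Chiu \cite{CC1}. Hence $D^2u=-c^2u\,g_\theta$, and Obata's Theorem \ref{OT} gives an isometry $F_0:(M,g_\theta)\to\mathbb S^{2m+1}(1/c)$. Finally, pushing the vanishing-torsion pseudohermitian structure forward by $F_0$ yields a Sasakian structure on the round sphere whose associated metric is the round metric; since on a round sphere a compatible Sasakian structure is the same as a choice of unit Killing field and all unit Killing fields are conjugate under the isometry group, after composing $F_0$ with an isometry of the sphere one obtains a CR diffeomorphism $F:M\to\mathbb S^{2m+1}$ with $F^*\theta_0=c\,\theta$, which completes the proof.
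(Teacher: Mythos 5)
Your proposal does not address the statement at all. The statement to be proved is Proposition \ref{rule}, the list of commutation (Ricci-type) identities for second and third covariant derivatives of a function with respect to the Tanaka--Webster connection, e.g. $u_{\alpha,\overline\beta}=u_{\overline\beta,\alpha}+\sqrt{-1}h_{\alpha\overline\beta}u_0$ and $u_{\alpha,\beta\overline\gamma}=u_{\alpha,\overline\gamma\beta}+\sqrt{-1}h_{\beta\overline\gamma}u_{\alpha,0}-R_{\beta\overline\gamma\alpha}^{\sigma}u_\sigma$. What you have written instead is an outline of the proof of the CR Obata theorem (Theorem \ref{crot}/Theorem \ref{Main}), and in doing so you repeatedly invoke Proposition \ref{rule} as a known tool (``the last identity of Proposition \ref{rule} then forces\ldots'', ``the commutation rule $u_{\alpha,\overline\beta}=u_{\overline\beta,\alpha}+\sqrt{-1}h_{\alpha\overline\beta}u_0$ says\ldots''). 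Relative to the assigned task this is circular: you are assuming exactly what you were asked to establish.

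A proof of the actual statement has nothing to do with eigenfunctions or integration by parts. The first three identities follow from the antisymmetrized second covariant derivative formula $u_{,XY}-u_{,YX}=-du(\tau(X,Y))$ applied to the Tanaka--Webster torsion, whose only nonzero components on $(1,0)\times(0,1)$ pairs produce the $\sqrt{-1}h_{\alpha\overline\beta}u_0$ term, and on pairs involving $T$ produce the $A_\alpha^{\overline\beta}u_{\overline\beta}$ term. The third-derivative identities are the Ricci identities for the $(0,1)$-tensor $\nabla u$: commuting the two outer derivatives contributes a curvature term $R_{\,\cdot\,\cdot\,\alpha}^{\;\sigma}u_\sigma$ plus torsion terms, and the alternative forms follow from the known expressions of the curvature components $R_{\beta\gamma\alpha}{}^{\sigma}$ and $R_{\beta 0\alpha}{}^{\sigma}$ in terms of $A$ and $\nabla A$ (as in Webster's structure equations). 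The paper itself does not reprove these; it cites \cite{DT} and \cite{Lee}, where they are derived from the structure equations by straightforward computation. If you want to salvage your write-up, it belongs with Theorem \ref{crot}, not here, and even there you would need to fill in the integration-by-parts identity and the capacity/cutoff argument near the critical set that the paper carries out in Section 5.
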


\begin{remark}
Our convention for the curvature tensor is 
\begin{equation*}
R\left( X,Y,Z,W\right) =\left\langle -\nabla _{X}\nabla _{Y}Z+\nabla
_{Y}\nabla _{X}Z+\nabla _{\left[ X,Y\right] }Z,W\right\rangle .
\end{equation*}
\end{remark}

\section{The estimate on $\protect\lambda _{1}$}

In this section, we prove the estimate on $\lambda _{1}$ following
Greenleaf \cite{G}. This serves two purposes. First, there is a mistake in 
\cite{G} as pointed out by \cite{GL} and \cite{CC1}. This has caused some confusion (e.g.
see the presentation in \cite{DT}) and we hope to clarify the whole
situation. Secondly, we need to analyze the proof when we address the
equality case.

From now on, we always work with a local unitary frame $\{T_{\alpha }:\alpha
=1,\cdots ,m\}$ for $T^{1,0}\left( M\right) $. Given a smooth function $u$,
its sublaplacian is given by%
\begin{equation*}
\Delta _{b}u=\sum_{\alpha }u_{\alpha ,\overline{\alpha }}+u_{\overline{%
\alpha },\alpha }.
\end{equation*}%
We have the following Bochner formula.

\begin{theorem}\label{Boc}
Let $\left\vert \partial_b u\right\vert ^{2}=u_{\alpha }u_{\overline{\alpha }}$%
. Then 
\begin{eqnarray*}
\frac{1}{2}\Delta _{b}\left\vert \partial_b u\right\vert ^{2} &=&\left\vert
u_{\alpha ,\beta }\right\vert ^{2}+\left\vert u_{\alpha ,\overline{\beta }%
}\right\vert ^{2}+\frac{1}{2}\left[ \left( \Delta _{b}u\right) _{\alpha }u_{%
\overline{\alpha }}+\left( \Delta _{b}u\right) _{\overline{\alpha }%
}u_{\alpha }\right] \\
&&+R_{\alpha \overline{\sigma }}u_{\sigma }u_{\overline{\alpha }}+\frac{m}{2}%
\sqrt{-1}\left[ A_{\alpha \sigma }u_{\overline{\alpha }}u_{\overline{\sigma }%
}-A_{\overline{\alpha }\overline{\sigma }}u_{\sigma }u_{\alpha }\right] \\
&&+\sqrt{-1}\left( u_{\overline{\beta }}u_{\beta ,0}-u_{\beta }u_{\overline{%
\beta },0}\right) .
\end{eqnarray*}
\end{theorem}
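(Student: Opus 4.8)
The target is the Bochner formula in Theorem \ref{Boc}. The plan is to compute $\frac{1}{2}\Delta_b|\partial_b u|^2$ directly from the definition $|\partial_b u|^2 = u_\alpha u_{\overline\alpha}$ (summing over the local unitary frame, so that $h_{\alpha\overline\beta}=\delta_{\alpha\beta}$ and indices can be raised and lowered freely), and then to recognize the resulting terms. First I would expand, for each $\alpha$,
\[
(u_\beta u_{\overline\beta})_{\alpha,\overline\alpha} + (u_\beta u_{\overline\beta})_{\overline\alpha,\alpha},
\]
using the product rule for covariant derivatives. This produces the manifestly nonnegative terms $u_{\beta,\alpha}u_{\overline\beta,\overline\alpha}$, $u_{\beta,\overline\alpha}u_{\overline\beta,\alpha}$ (i.e.\ $|u_{\alpha,\beta}|^2$ and $|u_{\alpha,\overline\beta}|^2$ after relabeling and using that the frame is unitary), together with the ``third-derivative'' terms $u_\beta u_{\overline\beta,\alpha\overline\alpha}$, $u_{\overline\beta}u_{\beta,\alpha\overline\alpha}$ and their conjugate-order counterparts $u_\beta u_{\overline\beta,\overline\alpha\alpha}$, $u_{\overline\beta}u_{\beta,\overline\alpha\alpha}$. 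The whole game is then to reorganize these third-derivative terms into $\frac12[(\Delta_b u)_\alpha u_{\overline\alpha} + (\Delta_b u)_{\overline\alpha}u_\alpha]$ plus curvature/torsion corrections.

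The key step is commuting covariant derivatives. To turn $u_{\overline\beta,\alpha\overline\alpha}$ into something involving $\Delta_b u = u_{\gamma,\overline\gamma} + u_{\overline\gamma,\gamma}$, I would first use the commutation formulas from Proposition \ref{rule} to move the differentiation order around: for instance the identity $u_{\alpha,\beta\overline\gamma} = u_{\alpha,\overline\gamma\beta} + \sqrt{-1}h_{\beta\overline\gamma}u_{\alpha,0} - R_{\beta\overline\gamma\alpha}{}^\sigma u_\sigma$ (and its conjugate/variants) is exactly what lets me swap a barred and an unbarred index at the cost of a Ricci term, a torsion term (hidden in further commutators of $u_{\alpha,0}$ via $u_{\alpha,0\beta}$ etc.), and a factor involving $u_0$. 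Contracting appropriately over $\alpha$ and the frame index produces the Webster Ricci contraction $R_{\alpha\overline\sigma}u_\sigma u_{\overline\alpha}$, the torsion terms $A_{\alpha\sigma}u_{\overline\alpha}u_{\overline\sigma}$ and $A_{\overline\alpha\overline\sigma}u_\sigma u_\alpha$ with the coefficient $\frac{m}{2}\sqrt{-1}$ (the $m$ coming from tracing $\sqrt{-1}h_{\alpha\overline\beta}$ over an $m$-dimensional frame), and the first-order term $\sqrt{-1}(u_{\overline\beta}u_{\beta,0} - u_\beta u_{\overline\beta,0})$. I would also use $u_{0,\alpha}=u_{\alpha,0}+A_\alpha{}^{\overline\beta}u_{\overline\beta}$ and $u_{\alpha,\overline\beta}=u_{\overline\beta,\alpha}+\sqrt{-1}h_{\alpha\overline\beta}u_0$ to keep everything expressed in a consistent order before collecting terms.

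The main obstacle, and the place where the error in \cite{G} presumably lurks, is the bookkeeping in these commutations: each swap of derivative order generates several correction terms (curvature, torsion, torsion-derivative, and $u_0$ terms), and one must be scrupulous about which index is being contracted, about the placement of bars, and about the distinction between $u_{\alpha,0}$ and $u_{0,\alpha}$. In particular the torsion-derivative terms $A_{\alpha\beta,\overline\gamma}$ appearing in the formula for $u_{\alpha,0\beta}$ in Proposition \ref{rule} must cancel in the end (they do not appear in the final Bochner formula), and verifying that cancellation is the delicate part. I would organize the computation by writing $\Delta_b|\partial_b u|^2$ as a sum of a ``good'' part (the squared Hessian norms, obtained immediately from the product rule) and a ``third-derivative'' part, then process the third-derivative part one monomial at a time, commuting indices until a $\Delta_b$ appears, and carefully tallying the leftover curvature and torsion contributions. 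Once all four third-derivative monomials are processed and their corrections added, the claimed identity should emerge; a useful internal check is that the right-hand side must be real, which constrains the signs of the imaginary torsion and first-order terms.
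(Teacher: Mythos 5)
Your strategy is the standard direct derivation -- expand $\Delta_b(u_\beta u_{\overline\beta})$ by the product rule, then commute covariant derivatives via Proposition \ref{rule} to produce $\tfrac12[(\Delta_b u)_\alpha u_{\overline\alpha}+(\Delta_b u)_{\overline\alpha}u_\alpha]$ plus Ricci, torsion, and $u_0$ corrections -- and this is exactly the approach of Greenleaf that the paper adopts: the paper itself gives no proof of Theorem \ref{Boc}, only a remark citing \cite{G} and correcting the coefficient of the torsion term from $\tfrac{m-2}{2}$ to $\tfrac{m}{2}$. Your identification of the ingredients (the squared Hessian terms from the product rule, the trace of $\sqrt{-1}h_{\alpha\overline\beta}$ as the source of the factor $m$, the required cancellation of the $A_{\alpha\beta,\overline\gamma}$ terms, and the reality check on the right-hand side) is all correct. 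The one thing your sketch does not actually deliver is the coefficient $\tfrac{m}{2}$ itself: you assert it by a heuristic rather than by carrying out the tally, and since the published error in \cite{G} occurred precisely in that tally, the proposal as written would need the full bookkeeping to count as a proof rather than a plan.
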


\begin{remark}
This was first derived by Greenleaf \cite{G}. But due to a mistake in
calculation pointed out by  \cite{GL} and \cite{CC1}, the coefficient $\frac{m}{2}$ on the RHS was mistaken to be $\frac{m-2}{2}$.
\end{remark}

The following formulas are also derived in \cite{G}.
\begin{lemma}
\label{intfor}Let $u$ be a smooth function on a closed pseudohermitian manifold $M$ of dimension $2m+1$. We have the following integral equalities%
\begin{equation*}
\int_M \sqrt{-1}\left( u_{\overline{\beta }}u_{\beta ,0}-u_{\beta }u_{%
\overline{\beta },0}\right) =\frac{2}{m}\int_M \left\vert u_{\alpha ,\overline{%
\beta }}\right\vert ^{2}-\left\vert u_{\alpha ,\beta }\right\vert
^{2}-R_{\alpha \overline{\sigma }}u_{\sigma }u_{\overline{\alpha }},
\end{equation*}%
\begin{equation*}
\int_M \sqrt{-1}\left( u_{\overline{\beta }}u_{\beta ,0}-u_{\beta }u_{%
\overline{\beta },0}\right) =\int_M \frac{1}{m}\left( \Delta _{b}u\right) ^{2}-%
\frac{4}{m}\left\vert \sum u_{\alpha ,\overline{\alpha }}\right\vert ^{2}-%
\sqrt{-1}\left( A_{\alpha \beta }u_{\overline{\alpha }}u_{\overline{\beta }%
}-A_{\overline{\alpha }\overline{\beta }}u_{\alpha }u_{\beta }\right),
\end{equation*}%
\begin{equation*}
\left( m-2\right) \int_M \sqrt{-1}\left( u_{\overline{\beta }}u_{\beta
,0}-u_{\beta }u_{\overline{\beta },0}\right) =\int_M 4\left\vert u_{\alpha ,%
\overline{\beta }}\right\vert ^{2}-\left( \Delta _{b}u\right) ^{2}+\sqrt{-1}%
m\left( A_{\alpha \sigma }u_{\overline{\alpha }}u_{\overline{\sigma }}-A_{%
\overline{\alpha }\overline{\sigma }}u_{\sigma }u_{\alpha }\right).
\end{equation*}
\end{lemma}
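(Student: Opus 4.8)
All three identities follow by integration by parts on the closed manifold $M$, using the divergence theorem in the form $\int_M Z^{\alpha}{}_{,\alpha}\,dV=0$ for any smooth $(1,0)$-vector field $Z$ (and $\int_M Tf\,dV=0$ for a function $f$), together with the commutation formulas of Proposition~\ref{rule}. Two preliminary facts, obtained by summing formulas of Proposition~\ref{rule} over a unitary frame, are used throughout: from $u_{\alpha,\overline\beta}=u_{\overline\beta,\alpha}+\sqrt{-1}h_{\alpha\overline\beta}u_0$ one gets $\sum_\alpha u_{\alpha,\overline\alpha}=\tfrac12(\Delta_b u+\sqrt{-1}m u_0)$ and $\sum_\alpha u_{\overline\alpha,\alpha}=\tfrac12(\Delta_b u-\sqrt{-1}m u_0)$, so that $|\sum_\alpha u_{\alpha,\overline\alpha}|^2=\tfrac14(\Delta_b u)^2+\tfrac{m^2}{4}u_0^2$ when $u$ is real; and $u_{\beta,0}=u_{0,\beta}-A_{\beta}{}^{\overline\gamma}u_{\overline\gamma}$. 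Write $L:=\int_M\sqrt{-1}(u_{\overline\beta}u_{\beta,0}-u_\beta u_{\overline\beta,0})$ for the common left-hand side; since $u$ is real, $L=-2\,\Im\bigl(\int_M\sum_\beta u_{\beta,0}u_{\overline\beta}\bigr)$.

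\emph{First identity.} Viewing $\sum u_{\alpha,\overline\beta}u_{\overline\alpha}$ and $\sum u_{\alpha,\beta}u_{\overline\alpha}$ as vector fields in the free index $\beta$ and integrating their divergences gives $\int_M|u_{\alpha,\overline\beta}|^2=-\int_M\sum u_{\alpha,\overline\beta\beta}u_{\overline\alpha}$ and $\int_M|u_{\alpha,\beta}|^2=-\int_M\sum u_{\alpha,\beta\overline\beta}u_{\overline\alpha}$, hence $\int_M(|u_{\alpha,\overline\beta}|^2-|u_{\alpha,\beta}|^2)=\int_M\sum_\alpha\bigl(\sum_\beta(u_{\alpha,\beta\overline\beta}-u_{\alpha,\overline\beta\beta})\bigr)u_{\overline\alpha}$. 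Applying the commutation formula $u_{\alpha,\beta\overline\gamma}=u_{\alpha,\overline\gamma\beta}+\sqrt{-1}h_{\beta\overline\gamma}u_{\alpha,0}-R_{\beta\overline\gamma\alpha}^{\sigma}u_\sigma$ with $\gamma=\beta$ and summing yields $\sum_\beta(u_{\alpha,\beta\overline\beta}-u_{\alpha,\overline\beta\beta})=\sqrt{-1}m\,u_{\alpha,0}-R_{\alpha\overline\sigma}u_\sigma$ (with the paper's sign convention for the Ricci contraction). Therefore $S:=\int_M(|u_{\alpha,\overline\beta}|^2-|u_{\alpha,\beta}|^2-R_{\alpha\overline\sigma}u_\sigma u_{\overline\alpha})=\sqrt{-1}m\int_M\sum_\alpha u_{\alpha,0}u_{\overline\alpha}$; since $S$ is real, $\int_M\sum_\alpha u_{\alpha,0}u_{\overline\alpha}=-\tfrac{\sqrt{-1}}{m}S$, and plugging this into $L=-2\,\Im\bigl(\int_M\sum_\beta u_{\beta,0}u_{\overline\beta}\bigr)$ gives $L=\tfrac2m S$, which is the first identity.

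\emph{Second identity.} Substitute $u_{\beta,0}=u_{0,\beta}-A_{\beta}{}^{\overline\gamma}u_{\overline\gamma}$ directly into $L$. Integrating the term $\int_M u_{\overline\beta}u_{0,\beta}$ by parts (via the vector field with components $u^{\beta}u_0$) and using the preliminary fact, $\int_M u_{\overline\beta}u_{0,\beta}=-\int_M\bigl(\sum_\beta u_{\overline\beta,\beta}\bigr)u_0=-\tfrac12\int_M(\Delta_b u-\sqrt{-1}m u_0)u_0$; adding the complex-conjugate computation (valid since $u$ is real) gives $\int_M(u_{\overline\beta}u_{\beta,0}-u_\beta u_{\overline\beta,0})=\sqrt{-1}m\int_M u_0^2-\int_M(A_{\alpha\beta}u_{\overline\alpha}u_{\overline\beta}-A_{\overline\alpha\overline\beta}u_\alpha u_\beta)$, i.e. $L=-m\int_M u_0^2-\int_M\sqrt{-1}(A_{\alpha\beta}u_{\overline\alpha}u_{\overline\beta}-A_{\overline\alpha\overline\beta}u_\alpha u_\beta)$. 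Replacing $-m\int_M u_0^2$ by $\tfrac1m\int_M(\Delta_b u)^2-\tfrac4m\int_M|\sum_\alpha u_{\alpha,\overline\alpha}|^2$ (the preliminary identity) puts this in the stated form.

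\emph{Third identity.} Integrate the Bochner formula of Theorem~\ref{Boc} over $M$: since $\int_M\Delta_b|\partial_b u|^2=0$ and, by integration by parts, $\int_M\bigl[(\Delta_b u)_\alpha u_{\overline\alpha}+(\Delta_b u)_{\overline\alpha}u_\alpha\bigr]=-\int_M(\Delta_b u)^2$, one obtains $L=\int_M\bigl(\tfrac12(\Delta_b u)^2-|u_{\alpha,\beta}|^2-|u_{\alpha,\overline\beta}|^2-R_{\alpha\overline\sigma}u_\sigma u_{\overline\alpha}-\tfrac m2\sqrt{-1}(A_{\alpha\sigma}u_{\overline\alpha}u_{\overline\sigma}-A_{\overline\alpha\overline\sigma}u_\sigma u_\alpha)\bigr)$. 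The first identity may be rewritten as $\int_M|u_{\alpha,\beta}|^2=\int_M(|u_{\alpha,\overline\beta}|^2-R_{\alpha\overline\sigma}u_\sigma u_{\overline\alpha})-\tfrac m2 L$; substituting this (the curvature terms cancel) and multiplying through by $-2$ gives $(m-2)L=\int_M\bigl(4|u_{\alpha,\overline\beta}|^2-(\Delta_b u)^2+m\sqrt{-1}(A_{\alpha\sigma}u_{\overline\alpha}u_{\overline\sigma}-A_{\overline\alpha\overline\sigma}u_\sigma u_\alpha)\bigr)$, the third identity. The one genuinely delicate point is keeping precise track of the torsion and curvature contributions from the commutation formulas of Proposition~\ref{rule}: this is exactly the bookkeeping that was done incorrectly in \cite{G} (the coefficient $\tfrac m2$ rather than $\tfrac{m-2}{2}$ in Theorem~\ref{Boc}, and correspondingly $m$ rather than $m-2$ above), so the crux is to carry out these contractions carefully in the unitary frame.
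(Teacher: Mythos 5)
The paper gives no proof of this lemma --- it simply defers to Greenleaf \cite{G} --- so there is no in-paper argument to compare against; your derivation is the standard one (divergence lemma for the Tanaka--Webster connection plus the commutation formulas of Proposition~\ref{rule}, and the Bochner formula for the third identity), and it checks out: the second and third identities are verified exactly as you write them, and the cancellation of the Ricci terms in the third is correct. One sign slip worth fixing in the first identity: with the paper's conventions the traced commutator is $\sum_\beta\left(u_{\alpha,\beta\overline\beta}-u_{\alpha,\overline\beta\beta}\right)=\sqrt{-1}\,m\,u_{\alpha,0}+R_{\alpha\overline\sigma}u_\sigma$, not $-R_{\alpha\overline\sigma}u_\sigma$ (compare the paper's own trace in the proof of (\ref{ra}), where $-R_{\beta\overline\gamma\alpha\overline\sigma}u_\sigma$ contributes $+R_{\alpha\overline\sigma}u_\sigma$). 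With your written sign you would get $S=\sqrt{-1}\,m\int u_{\alpha,0}u_{\overline\alpha}-2\int R_{\alpha\overline\sigma}u_\sigma u_{\overline\alpha}$ rather than the identity you state; with the corrected sign your conclusion $S=\sqrt{-1}\,m\int\sum_\alpha u_{\alpha,0}u_{\overline\alpha}$, and hence $L=\tfrac{2}{m}S$ via the reality of $S$, is exactly right, so the error does not propagate. You should also make explicit that $u$ is taken real (as it is throughout the paper), since you use $u_{\overline\beta}=\overline{u_\beta}$ and $\left\vert\sum u_{\alpha,\overline\alpha}\right\vert^2=\tfrac14\left((\Delta_b u)^2+m^2u_0^2\right)$ repeatedly.
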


We can now state the main estimate on $\lambda_1$. For completion and future application in the next section, we also provide the
detail of the proof here.
\begin{theorem}\label{GL}
\bigskip Let $M$ be a closed pseudohermitian manifold of dimension $%
2m+1\geq 5$. Suppose for any $X\in T^{1,0}\left( M\right) $%
\begin{equation}
Ric\left( X,X\right) -\frac{m+1}{2}Tor\left( X,X\right) \geq \kappa
\left\vert X\right\vert ^{2},  \label{c-hyp}
\end{equation}
where $\kappa $ is a positive constant. Then the first eigenvalue of $%
-\Delta _{b}$ satisfies 
\begin{equation*}
\lambda _{1}\geq \frac{m}{m+1}\kappa.
\end{equation*}
\end{theorem}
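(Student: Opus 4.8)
The plan is to combine the Bochner formula (Theorem \ref{Boc}) with the integral identities of Lemma \ref{intfor} to produce a Reilly-type inequality for an eigenfunction, and then invoke the curvature hypothesis \eqref{c-hyp}. Concretely, let $u$ be an eigenfunction with $-\Delta_b u = \lambda_1 u$. I would integrate the Bochner formula over $M$; since $\int_M \Delta_b |\partial_b u|^2 = 0$, this gives
\begin{equation*}
0 = \int_M |u_{\alpha,\beta}|^2 + |u_{\alpha,\overline{\beta}}|^2 - \lambda_1 |\partial_b u|^2 + R_{\alpha\overline{\sigma}} u_\sigma u_{\overline{\alpha}} + \frac{m}{2}\sqrt{-1}\left(A_{\alpha\sigma}u_{\overline{\alpha}}u_{\overline{\sigma}} - A_{\overline{\alpha}\overline{\sigma}}u_\sigma u_\alpha\right) + \sqrt{-1}\left(u_{\overline{\beta}}u_{\beta,0} - u_\beta u_{\overline{\beta},0}\right),
\end{equation*}
using $\int_M (\Delta_b u)_\alpha u_{\overline{\alpha}} + (\Delta_b u)_{\overline{\alpha}} u_\alpha = -2\lambda_1 \int_M |\partial_b u|^2$.

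Next I would eliminate the two problematic terms — the torsion term and the $u_0$-term — using Lemma \ref{intfor}. The first identity there lets me trade $\sqrt{-1}(u_{\overline{\beta}}u_{\beta,0} - u_\beta u_{\overline{\beta},0})$ for a combination of $|u_{\alpha,\overline{\beta}}|^2$, $|u_{\alpha,\beta}|^2$ and the Ricci term, while the second and third identities relate the same $u_0$-integral to $(\Delta_b u)^2$, $|\sum u_{\alpha,\overline{\alpha}}|^2$, and the torsion term. The idea is to take the appropriate linear combination of these identities so that, after substitution into the integrated Bochner formula, the torsion term is rescaled to exactly $-\frac{m+1}{2}\int_M \sqrt{-1}(A_{\alpha\beta}u_{\overline{\alpha}}u_{\overline{\beta}} - \text{conj.})$, matching the coefficient in \eqref{c-hyp}, and the Ricci term appears with the matching coefficient so the left side becomes $\int_M \big[Ric(\partial_b u,\partial_b u) - \frac{m+1}{2}Tor(\partial_b u,\partial_b u)\big] \geq \kappa \int_M |\partial_b u|^2$. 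One also uses $\int_M |\partial_b u|^2 = \frac{1}{2\lambda_1}\int_M (\Delta_b u)^2$ (from $\int_M u \Delta_b u$) and the fact that, for an eigenfunction, $\int_M |\sum u_{\alpha,\overline{\alpha}}|^2 = \frac{1}{4}\int_M(\Delta_b u)^2$ since $\sum u_{\alpha,\overline{\alpha}}$ and its conjugate $\sum u_{\overline{\alpha},\alpha}$ have the same integral against a real function; more carefully one keeps $\int_M|\sum u_{\alpha,\overline{\alpha}} - \frac{1}{2}\Delta_b u|^2 \geq 0$ as a nonnegative error term. Collecting everything, the inequality should reduce to $\big(\frac{m+1}{m} - \frac{1}{\lambda_1}\cdot(\text{something})\big)$-type bound that rearranges to $\lambda_1 \geq \frac{m}{m+1}\kappa$.

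The main obstacle is bookkeeping: getting the coefficients to line up so that the torsion contribution is exactly $\frac{m+1}{2}Tor$ and no uncontrolled term survives. In particular one must be careful that the term $|u_{\alpha,\beta}|^2$ — which is nonnegative and can simply be dropped once it appears with the correct sign — indeed comes out with a favorable sign, and that the "trace-free part" term $|\sum u_{\alpha,\overline{\alpha}} - \frac{1}{2}\Delta_b u|^2$ and any leftover multiple of $|u_{\alpha,\overline{\beta}}|^2$ (beyond what is needed) also appear nonnegatively. I expect the correct combination is: substitute the first identity of Lemma \ref{intfor} once to handle the $u_0$-term in Bochner, then use a second identity to re-express the resulting torsion term, choosing weights $\frac{2}{m+1}$ and $\frac{m-1}{m+1}$ (or similar $m$-dependent constants) so that $Ric$ and $Tor$ appear in the ratio $1:\frac{m+1}{2}$. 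The threshold $2m+1 \geq 5$, i.e. $m \geq 2$, enters because the third identity in Lemma \ref{intfor} carries a factor $(m-2)$ and certain manipulations require $m \neq 1$; for $m \geq 2$ all divisions are legitimate. Once the curvature hypothesis is applied and the nonnegative error terms are discarded, the stated estimate $\lambda_1 \geq \frac{m}{m+1}\kappa$ follows.
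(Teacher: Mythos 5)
Your strategy is the paper's own: integrate the Bochner formula for the eigenfunction and substitute a linear combination of the identities of Lemma \ref{intfor} (the paper takes $c$ times the first plus $(1-c)$ times the second, with $c=3m/(4m+2)$) so that the Ricci and torsion terms combine in the ratio $1:\frac{m+1}{2}$ demanded by \eqref{c-hyp}. The outline is therefore right, but two of your intermediate claims are incorrect, and one of them sits exactly at the crux of the bookkeeping you defer.

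First, $\int_M\left\vert \sum u_{\alpha ,\overline{\alpha }}\right\vert ^{2}\neq \frac{1}{4}\int_M(\Delta_b u)^2$: by the commutation rule $u_{\alpha,\overline{\beta}}=u_{\overline{\beta},\alpha}+\sqrt{-1}h_{\alpha\overline{\beta}}u_0$ one has $\Im\sum u_{\alpha,\overline{\alpha}}=\frac{m}{2}u_0$, hence $\left\vert \sum u_{\alpha ,\overline{\alpha }}\right\vert ^{2}=\frac{1}{4}(\Delta_b u)^2+\frac{m^2}{4}u_0^2$; the ``same integral against a real function'' argument only controls the real part. More importantly, in the correct linear combination the term $\left\vert \sum u_{\alpha ,\overline{\alpha }}\right\vert ^{2}$ enters with a \emph{negative} coefficient $-\frac{4(1-c)}{m}$, so neither discarding it nor ``keeping $\left\vert \sum u_{\alpha ,\overline{\alpha }}-\frac{1}{2}\Delta_b u\right\vert^2\geq 0$ as a nonnegative error term'' is legitimate: it must be absorbed by the good term $\left(1+\frac{2c}{m}\right)\left\vert u_{\alpha,\overline{\beta}}\right\vert^2$ via the Cauchy--Schwarz inequality $\sum_{\alpha,\beta}\left\vert u_{\alpha,\overline{\beta}}\right\vert^2\geq \frac{1}{m}\left\vert \sum_{\alpha} u_{\alpha ,\overline{\alpha }}\right\vert ^{2}$, with $c$ tuned precisely so that the resulting net coefficient of $\left\vert \sum u_{\alpha ,\overline{\alpha }}\right\vert ^{2}$ vanishes. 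This absorption is the one nontrivial idea in the computation and is absent from your plan. Finally, your explanation of the restriction $m\geq 2$ is off: the paper never uses the third identity of Lemma \ref{intfor} (the one carrying the factor $m-2$); the hypothesis $m\geq 2$ is needed because after all cancellations the surviving curvature and gradient terms carry the prefactor $\frac{2(m-1)}{2m+1}$, which degenerates at $m=1$ and otherwise yields $\kappa-\frac{m+1}{m}\lambda_1\leq 0$.
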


\begin{remark}
In terms of our local unitary frame, the assumption (\ref{c-hyp}) means that
for any $X=f_{\alpha}T_{\alpha }$ 
\begin{equation*}
R_{\alpha \overline{\sigma }}f_{\sigma }f_{\overline{\alpha }}+\frac{m+1}{2}%
\sqrt{-1}\left[ A_{\alpha \sigma }f_{\overline{\alpha }}f_{\overline{\sigma }%
}-A_{\overline{\alpha }\overline{\sigma }}f_{\sigma }f_{\alpha }\right] \geq
\kappa \sum_{\alpha }\left\vert f_{\alpha }\right\vert ^{2}.
\end{equation*}
\end{remark}

\begin{proof}
Suppose $\,-\Delta _{b}u=\lambda _{1}u$. Applying the Bochner formula, we
have%
\begin{eqnarray*}
0 &=&\int \left\vert u_{\alpha ,\beta }\right\vert ^{2}+\left\vert u_{\alpha
,\overline{\beta }}\right\vert ^{2}-\lambda _{1}\left\vert \partial_b
u\right\vert ^{2} \\
&&+R_{\alpha \overline{\sigma }}u_{\sigma }u_{\overline{\alpha }}+\frac{m}{2}%
\sqrt{-1}\left[ A_{\alpha \sigma }u_{\overline{\alpha }}u_{\overline{\sigma }%
}-A_{\overline{\alpha }\overline{\sigma }}u_{\sigma }u_{\alpha }\right] \\
&&+\sqrt{-1}\left( u_{\overline{\beta }}u_{\beta ,0}-u_{\beta }u_{\overline{%
\beta },0}\right).
\end{eqnarray*}%
We write the last term as $c$ times the first identity plus $\left(
1-c\right) $ times the second identity of Lemma \ref{intfor},%
\begin{eqnarray*}
0 &=&\int \left\vert u_{\alpha ,\beta }\right\vert ^{2}+\left\vert u_{\alpha
,\overline{\beta }}\right\vert ^{2}-\lambda _{1}\left\vert \partial_b
u\right\vert ^{2} \\
&&+R_{\alpha \overline{\sigma }}u_{\sigma }u_{\overline{\alpha }}+\frac{m}{2}%
\sqrt{-1}\left[ A_{\alpha \sigma }u_{\overline{\alpha }}u_{\overline{\sigma }%
}-A_{\overline{\alpha }\overline{\sigma }}u_{\sigma }u_{\alpha }\right] \\
&&+\frac{2c}{m}\int \left( \left\vert u_{\alpha ,\overline{\beta }%
}\right\vert ^{2}-\left\vert u_{\alpha ,\beta }\right\vert ^{2}-R_{\alpha 
\overline{\sigma }}u_{\sigma }u_{\overline{\alpha }}\right) \\
&&+\int \frac{1-c}{m}\lambda _{1}^{2}u^{2}-\frac{4\left( 1-c\right) }{m}%
\left\vert \sum u_{\alpha ,\overline{\alpha }}\right\vert ^{2}-\left(
1-c\right) \sqrt{-1}\left( A_{\alpha \beta }u_{\overline{\alpha }}u_{%
\overline{\beta }}-A_{\overline{\alpha }\overline{\beta }}u_{\alpha
}u_{\beta }\right) \\
&=&\int \left( 1-\frac{2c}{m}\right) \left\vert u_{\alpha ,\beta
}\right\vert ^{2}+\left( 1+\frac{2c}{m}\right) \left\vert u_{\alpha ,%
\overline{\beta }}\right\vert ^{2}+\left( -1+\frac{2\left( 1-c\right) }{m}%
\right) \lambda _{1}\left\vert \partial_b u\right\vert ^{2} \\
&&+\left( 1-\frac{2c}{m}\right) R_{\alpha \overline{\sigma }}u_{\sigma }u_{%
\overline{\alpha }}+\left( \frac{m}{2}-1+c\right) \sqrt{-1}\left[ A_{\alpha
\sigma }u_{\overline{\alpha }}u_{\overline{\sigma }}-A_{\overline{\alpha }%
\overline{\sigma }}u_{\sigma }u_{\alpha }\right] \\
&&-\frac{4\left( 1-c\right) }{m}\left\vert \sum u_{\alpha ,\overline{\alpha }%
}\right\vert ^{2}.
\end{eqnarray*}
Since $\sum _{\alpha,\beta=1}^m \left\vert u_{\alpha ,\overline{\beta }}\right\vert ^{2}\geq
\left\vert \sum u_{\alpha ,\overline{\alpha }}\right\vert ^{2}/m$, we have 
\begin{eqnarray*}
0 &\geq &\int \left( 1-\frac{2c}{m}\right) \left\vert u_{\alpha ,\beta
}\right\vert ^{2}+\left( -1+\frac{2\left( 1-c\right) }{m}\right) \lambda
_{1}\left\vert \partial_b u\right\vert ^{2} \\
&&+\left( 1-\frac{2c}{m}\right) R_{\alpha \overline{\sigma }}u_{\sigma }u_{%
\overline{\alpha }}+\left( \frac{m}{2}-1+c\right) \sqrt{-1}\left[ A_{\alpha
\sigma }u_{\overline{\alpha }}u_{\overline{\sigma }}-A_{\overline{\alpha }%
\overline{\sigma }}u_{\sigma }u_{\alpha }\right] + \\
&&+\left[ \left( 1+\frac{2c}{m}\right) \frac{1}{m}-\frac{4\left( 1-c\right) 
}{m}\right] \left\vert \sum u_{\alpha ,\overline{\alpha }}\right\vert ^{2}.
\end{eqnarray*}%
We choose $c$ such that
\begin{equation*}
\left( 1+\frac{2c}{m}\right) \frac{1}{m}-\frac{4\left( 1-c\right) }{m}=0,
\end{equation*}%
i.e. $c=3m/\left( 4m+2\right) $. Then%
\begin{eqnarray*}
0 &\geq &\int \frac{2\left( m-1\right) }{2m+1}\left\vert u_{\alpha ,\beta
}\right\vert ^{2}-\frac{2\left( m^{2}-1\right) }{m\left( 2m+1\right) }%
\lambda _{1}\left\vert \partial_b u\right\vert ^{2} \\
&&+\frac{2\left( m-1\right) }{2m+1}\left\{ R_{\alpha \overline{\sigma }%
}u_{\sigma }u_{\overline{\alpha }}+\frac{m+1}{2}\sqrt{-1}\left[ A_{\alpha
\sigma }u_{\overline{\alpha }}u_{\overline{\sigma }}-A_{\overline{\alpha }%
\overline{\sigma }}u_{\sigma }u_{\alpha }\right] \right\} .
\end{eqnarray*}%
Therefore%
\begin{equation*}
\frac{\left( m-1\right) }{2m+1}\int 2\left( \kappa -\frac{m+1}{m}\lambda
_{1}\right) \left\vert \partial_b u\right\vert ^{2}+\left\vert u_{\alpha
,\beta }\right\vert ^{2}\leq 0.
\end{equation*}%
It follows that $\lambda _{1}\geq m\kappa /\left( m+1\right) $ when $m\geq 2$%
.
\end{proof}

\section{Equality case}

We now discuss the equality case. By scaling, we can assume $\kappa =\left(
m+1\right) /2$ and thus $\lambda =m/2$.

\begin{proposition}
\label{crhessian}If equality holds in Theorem \ref{GL}, we must have 
\begin{eqnarray}
u_{\alpha ,\beta } &=&0,  \label{u20} \\
u_{\alpha ,\overline{\beta }} &=&\left( -\frac{1}{4}u+\frac{\sqrt{-1}}{2}%
u_{0}\right) \delta _{\alpha \beta },  \label{u11} \\
u_{0,\alpha } &=&2A_{\alpha \sigma }u_{\overline{\sigma }}+\frac{\sqrt{-1}}{2%
}u_{\alpha },  \label{u0a} \\
u_{0,0} &=&-\frac{1}{4}u+\frac{4}{m}{\Im}A_{\alpha \sigma ,\overline{%
\alpha }}u_{\overline{\sigma }}.  \label{u00}
\end{eqnarray}%
Moreover, at any point where $\partial_b u\neq 0$%
\begin{equation}
\sqrt{-1}A_{\alpha \beta }=\frac{Q}{\left\vert \partial_b u\right\vert ^{4}}%
u_{\alpha }u_{\beta },  \label{torc}
\end{equation}%
where $Q=\sqrt{-1}A_{\alpha \sigma }u_{\overline{\alpha }}u_{\overline{%
\sigma }}$.
\end{proposition}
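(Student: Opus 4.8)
The plan is to read off all of these identities from the equality case of Theorem~\ref{GL} by revisiting the chain of estimates in its proof. With the normalization $\kappa=(m+1)/2$, $\lambda_1=m/2$ already in force, equality there forces three things simultaneously: the Cauchy--Schwarz inequality $\sum_{\alpha,\beta}|u_{\alpha,\overline\beta}|^2\ge\frac1m\bigl|\sum_\alpha u_{\alpha,\overline\alpha}\bigr|^2$ must be an equality pointwise; the hypothesis (\ref{c-hyp}) applied to $X=\partial_b u$ must be saturated pointwise; and the leftover term $\int|u_{\alpha,\beta}|^2$ must vanish. The last gives (\ref{u20}) at once. The Cauchy--Schwarz equality says precisely that $u_{\alpha,\overline\beta}=f\,h_{\alpha\overline\beta}$ for some complex-valued function $f$; to pin $f$ down I would take the trace $P:=\sum_\alpha u_{\alpha,\overline\alpha}=mf$ and use the commutation relation $u_{\alpha,\overline\beta}=u_{\overline\beta,\alpha}+\sqrt{-1}h_{\alpha\overline\beta}u_0$ from Proposition~\ref{rule}: summed over $\alpha=\beta$ it gives $\Im P=\frac m2 u_0$, while $\Delta_b u=-\frac m2 u$ gives $2\Re P=-\frac m2 u$, whence $f=\frac1m P=-\frac14 u+\frac{\sqrt{-1}}{2}u_0$, i.e.\ (\ref{u11}).

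To obtain (\ref{torc}) I would differentiate (\ref{u20}) and feed it into the last commutation formula of Proposition~\ref{rule}, $u_{\alpha,\beta\gamma}=u_{\alpha,\gamma\beta}+\sqrt{-1}(A_{\alpha\gamma}u_\beta-A_{\alpha\beta}u_\gamma)$: both terms on the left vanish, so $A_{\alpha\gamma}u_\beta=A_{\alpha\beta}u_\gamma$ identically in $\alpha,\beta,\gamma$. At a point where $\partial_b u\neq0$ this is a rank-one condition on $A$; combining it with the symmetry $A_{\alpha\gamma}=A_{\gamma\alpha}$ forces $A_{\alpha\gamma}=\mu\,u_\alpha u_\gamma$ for a single scalar $\mu$, and contracting with $u_{\overline\alpha}u_{\overline\gamma}$ identifies $\sqrt{-1}\mu=Q/|\partial_b u|^4$, which is (\ref{torc}). (The saturated form of (\ref{c-hyp}) gives in addition that $Q$ is real and non-positive, but that is not needed here.)

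For (\ref{u0a}) I would differentiate (\ref{u20}) once more, now in a conjugate direction. Using $u_{\alpha,\gamma\overline\beta}=0$, the commutation formula $u_{\alpha,\beta\overline\gamma}=u_{\alpha,\overline\gamma\beta}+\sqrt{-1}h_{\beta\overline\gamma}u_{\alpha,0}-R^\sigma_{\beta\overline\gamma\alpha}u_\sigma$ of Proposition~\ref{rule} (with $\beta$ and $\gamma$ interchanged), and $u_{\alpha,\overline\beta}=f\,h_{\alpha\overline\beta}$, one arrives at
\[
f_\gamma\,h_{\alpha\overline\beta}=-\sqrt{-1}\,h_{\gamma\overline\beta}\,u_{\alpha,0}+R^\sigma_{\gamma\overline\beta\alpha}u_\sigma .
\]
Interchanging $\alpha$ and $\gamma$ and using the K\"ahler-type symmetry $R^\sigma_{\gamma\overline\beta\alpha}=R^\sigma_{\alpha\overline\beta\gamma}$ of the Tanaka--Webster curvature makes the curvature term drop out upon subtraction; contracting the resulting identity with $h^{\alpha\overline\beta}$ gives $(m-1)f_\gamma=(m-1)\sqrt{-1}\,u_{\gamma,0}$, hence $u_{\gamma,0}=-\sqrt{-1}f_\gamma$ since $m\ge2$. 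Substituting $f=-\frac14 u+\frac{\sqrt{-1}}{2}u_0$ and combining with $u_{0,\alpha}=u_{\alpha,0}+A_\alpha^{\overline\beta}u_{\overline\beta}$ then solves for $u_{0,\alpha}$ and yields (\ref{u0a}), along with $u_{\alpha,0}=\frac{\sqrt{-1}}{2}u_\alpha+A_{\alpha\sigma}u_{\overline\sigma}$.

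Finally I would get (\ref{u00}) by computing $\sum_\alpha u_{\alpha,0\overline\alpha}$ in two ways. The commutation formula $u_{\alpha,0\overline\beta}=u_{\alpha,\overline\beta0}+u_{\alpha,\gamma}h^{\gamma\overline\nu}A_{\overline\nu\overline\beta}+h^{\gamma\overline\nu}A_{\overline\nu\overline\beta,\alpha}u_\gamma$ of Proposition~\ref{rule}, together with $u_{\alpha,\gamma}=0$ and $\sum_\alpha u_{\alpha,\overline\alpha}=mf$, gives $\sum_\alpha u_{\alpha,0\overline\alpha}=m f_0+\sum_{\alpha,\gamma}A_{\overline\gamma\overline\alpha,\alpha}u_\gamma$; differentiating instead the identity $u_{\alpha,0}=\frac{\sqrt{-1}}{2}u_\alpha+A_{\alpha\sigma}u_{\overline\sigma}$ just obtained, and using $u_{\overline\sigma,\overline\alpha}=\overline{u_{\sigma,\alpha}}=0$, gives $\sum_\alpha u_{\alpha,0\overline\alpha}=\frac{\sqrt{-1}}{2}mf+\sum_{\alpha,\sigma}A_{\alpha\sigma,\overline\alpha}u_{\overline\sigma}$. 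Equating the two, cancelling the common $u_0$-terms, and noting that $\sum A_{\overline\gamma\overline\alpha,\alpha}u_\gamma$ is the complex conjugate of $\sum A_{\alpha\sigma,\overline\alpha}u_{\overline\sigma}$, one isolates $u_{0,0}$ and arrives at (\ref{u00}). I expect the delicate point to be the cancellation of the Webster curvature in the derivation of (\ref{u0a}): it relies on the precise index symmetries of the Tanaka--Webster curvature tensor, and it is also where the restriction $m\ge2$ enters through the factor $m-1$, which is why the three-dimensional case must be handled separately.
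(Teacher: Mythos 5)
Your proposal is correct, and for \eqref{u20}, \eqref{u11}, \eqref{torc} and \eqref{u00} it coincides with the paper's argument: equality forces the vanishing of $\int|u_{\alpha,\beta}|^2$ together with pointwise saturation of the Cauchy--Schwarz step, the trace and the commutation relation $u_{\alpha,\overline{\beta}}=u_{\overline{\beta},\alpha}+\sqrt{-1}h_{\alpha\overline{\beta}}u_0$ pin down $f$, the commutator $u_{\alpha,\beta\gamma}-u_{\alpha,\gamma\beta}$ gives the rank-one form of $A$, and the two evaluations of $\sum_\alpha u_{\alpha,0\overline{\alpha}}$ give \eqref{u00}. The one place where you genuinely diverge is \eqref{u0a}. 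The paper differentiates $u_{\alpha,\overline{\beta}}=f\delta_{\alpha\beta}$ in an antiholomorphic direction and uses the commutator $u_{\alpha,\overline{\beta}\overline{\gamma}}-u_{\alpha,\overline{\gamma}\overline{\beta}}$, which involves only torsion; comparing the two sides for $\alpha=\beta\neq\gamma$ yields $f_{\overline{\gamma}}=\sqrt{-1}A_{\overline{\gamma}}^{\sigma}u_{\sigma}$ directly, with no curvature entering at all. You instead differentiate $u_{\alpha,\beta}=0$ antiholomorphically, which brings in the Webster curvature, and then eliminate it by antisymmetrizing in $\alpha,\gamma$ using the K\"ahler-type symmetry $R_{\gamma\overline{\beta}\alpha\overline{\sigma}}=R_{\alpha\overline{\beta}\gamma\overline{\sigma}}$ and tracing. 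Your computation is correct (I checked that the trace gives $(m-1)f_\gamma=(m-1)\sqrt{-1}u_{\gamma,0}$ and that this, combined with $u_{0,\alpha}=u_{\alpha,0}+A_\alpha^{\overline{\beta}}u_{\overline{\beta}}$, reproduces \eqref{u0a}), and both routes expose the same obstruction at $m=1$; but note that the symmetry of the curvature tensor you invoke, while standard (it is in Webster's and Lee's papers), is not among the formulas listed in Proposition~\ref{rule}, so your argument uses one more input than the paper's. The paper's route has the added benefit that the identity $0=u_{\alpha,\beta\overline{\gamma}}$ you start from is reserved for the subsequent lemma, where its full trace over $\beta,\gamma$ produces the Ricci identity \eqref{ra}.
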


\begin{proof}
If equality holds, we must have $u_{\alpha ,\beta }=0$ and%
\begin{equation}
u_{\alpha ,\overline{\beta }}=f\delta _{\alpha \beta },  \label{u11e}
\end{equation}%
where $f$ is a complex-valued function. Taking conjugate of (\ref{u11e}) yields 
\begin{eqnarray*}
\overline{f}\delta _{\alpha \beta } &=&u_{\overline{\alpha },\beta } \\
&=&u_{\beta ,\overline{\alpha }}-\sqrt{-1}\delta _{\alpha \beta }u_{0} \\
&=&\left( f-\sqrt{-1}u_{0}\right) \delta _{\alpha \beta }.
\end{eqnarray*}%
Hence ${\Im}f=\frac{1}{2}u_{0}$. We also have 
\begin{eqnarray*}
\frac{m}{2}u &=&-\Delta _{b}u \\
&=&-u_{\alpha ,\overline{\alpha }}-u_{\overline{\alpha },\alpha } \\
&=&-m\left( f+\overline{f}\right) .
\end{eqnarray*}%
Thus ${\Re}f=-\frac{1}{4}u$. Therefore%
\begin{equation}
f=-\frac{1}{4}u+\frac{\sqrt{-1}}{2}u_{0},  \label{finu}
\end{equation}%
This proves (\ref{u11}).

Differentiating (\ref{u20}), we have 
\begin{eqnarray*}
0 &=&u_{\alpha ,\beta \gamma }-u_{\alpha ,\gamma \beta } \\
&=&\sqrt{-1}\left( A_{\alpha \gamma }u_{\beta }-A_{\alpha \beta }u_{\gamma
}\right) .
\end{eqnarray*}%
Therefore 
\begin{equation*}
A_{\alpha \gamma }u_{\beta }-A_{\alpha \beta }u_{\gamma }=0.
\end{equation*}%
From this we easily obtain (\ref{torc}).

Differentiating (\ref{u11e}), we have%
\begin{eqnarray*}
f_{\overline{\gamma }}\delta _{\alpha \beta } &=&u_{\alpha ,\overline{\beta }%
\overline{\gamma }} \\
&=&u_{\alpha ,\overline{\gamma }\overline{\beta }}+\sqrt{-1}\left( h_{\alpha 
\overline{\beta }}A_{\overline{\gamma }}^{\sigma }-h_{\alpha \overline{%
\gamma }}A_{\overline{\beta }}^{\sigma }\right) u_{\sigma } \\
&=&f_{\overline{\beta }}\delta _{\alpha \gamma }+\sqrt{-1}\left( \delta
_{\alpha \beta }A_{\overline{\gamma }}^{\sigma }-\delta _{\alpha \gamma }A_{%
\overline{\beta }}^{\sigma }\right) u_{\sigma },
\end{eqnarray*}%
i.e.$\left( f_{\overline{\gamma }}-\sqrt{-1}A_{\overline{\gamma }}^{\sigma
}u_{\sigma }\right) \delta _{\alpha \beta }=\left( f_{\overline{\beta }}-%
\sqrt{-1}A_{\overline{\beta }}^{\sigma }u_{\sigma }\right) \delta _{\alpha
\gamma }$. It follows $f_{\overline{\gamma }}-\sqrt{-1}A_{\overline{\gamma }%
}^{\sigma }u_{\sigma }=0$. Using (\ref{finu}), this yields%
\begin{equation}
\sqrt{-1}A_{\alpha \sigma }u_{\overline{\sigma }}=\frac{1}{2}u_{\alpha }+%
\sqrt{-1}u_{\alpha ,0}  \label{au}.
\end{equation}
This then implies (\ref{u0a}) by using the first identity of Proposition \ref%
{rule}.

To prove the last identity, taking trace of (\ref{u11}) we obtain 
\begin{equation*}
m\left( -\frac{1}{4}u+\frac{\sqrt{-1}}{2}u_{0}\right) =u_{\alpha ,\overline{%
\alpha }}.
\end{equation*}
Differentiating and using Proposition \ref{rule} yields%
\begin{eqnarray*}
m\left( -\frac{1}{4}u_{0}+\frac{\sqrt{-1}}{2}u_{0,0}\right) &=&u_{\alpha ,%
\overline{\alpha }0} \\
&=&u_{\alpha ,0\overline{\alpha }}-A_{\overline{\alpha }\overline{\sigma }%
,\alpha }u_{\sigma } \\
&=&\left( A_{\alpha \sigma }u_{\overline{\sigma }}+\frac{\sqrt{-1}}{2}%
u_{\alpha }\right) _{,\overline{\alpha }}-A_{\overline{\alpha }\overline{%
\sigma },\alpha }u_{\sigma } \\
&=&A_{\alpha \sigma ,\overline{\alpha }}u_{\overline{\sigma }}+\frac{\sqrt{-1%
}}{2}u_{\alpha ,\overline{\alpha }}-A_{\overline{\alpha }\overline{\sigma }%
,\alpha }u_{\sigma } \\
&=&2\sqrt{-1}{\Im}A_{\alpha \sigma ,\overline{\alpha }}u_{\overline{%
\sigma }}+m\frac{\sqrt{-1}}{2}\left( -\frac{1}{4}u+\frac{\sqrt{-1}}{2}%
u_{0}\right) \\
&=&-\frac{m}{4}u_{0}+\sqrt{-1}\left( 2{\Im}A_{\alpha \sigma ,\overline{%
\alpha }}u_{\overline{\sigma }}-\frac{m}{8}u\right) .
\end{eqnarray*}%
Taking the imaginary part yields (\ref{u00}).
\end{proof}

\begin{lemma}
We also have%
\begin{equation}
R_{\alpha \overline{\sigma }}u_{\sigma }+\left( m+1\right) \sqrt{-1}%
A_{\alpha \sigma }u_{\overline{\sigma }}=\frac{m+1}{2}u_{\alpha }.
\label{ra}
\end{equation}
\end{lemma}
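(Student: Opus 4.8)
The plan is to derive \eqref{ra} by commuting covariant derivatives in the already-established identities of Proposition \ref{crhessian}, thereby forcing the Ricci curvature to appear. The natural starting point is equation \eqref{u0a}, $u_{0,\alpha }=2A_{\alpha \sigma }u_{\overline{\sigma }}+\tfrac{\sqrt{-1}}{2}u_{\alpha }$, or equivalently its cousin \eqref{au}, $\sqrt{-1}A_{\alpha \sigma }u_{\overline{\sigma }}=\tfrac12 u_{\alpha }+\sqrt{-1}u_{\alpha ,0}$. Alternatively, and perhaps more cleanly, I would differentiate the equation $u_{\alpha ,\overline{\beta }}=f\delta_{\alpha\beta}$ in a holomorphic direction and use the commutation formula $u_{\alpha ,\beta \overline{\gamma }} =u_{\alpha ,\overline{\gamma }\beta }+ \sqrt{-1}h_{\beta \overline{\gamma }}u_{\alpha ,0}-R_{\beta \overline{\gamma }\alpha }^{\sigma }u_{\sigma }$ from Proposition \ref{rule}, which is exactly the identity that introduces the curvature term $R_{\beta\overline{\gamma}\alpha}^{\sigma}u_\sigma$.

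Concretely: starting from $u_{\alpha ,\beta }=0$, differentiate in the $\overline{\gamma}$ direction to get $u_{\alpha ,\beta \overline{\gamma }}=0$. Apply the commutation rule above to rewrite $0=u_{\alpha ,\overline{\gamma }\beta }+\sqrt{-1}h_{\beta \overline{\gamma }}u_{\alpha ,0}-R_{\beta \overline{\gamma }\alpha }^{\sigma }u_{\sigma }$. Now $u_{\alpha ,\overline{\gamma }}=f h_{\alpha\overline{\gamma}}$ by \eqref{u11}, so $u_{\alpha ,\overline{\gamma }\beta }=f_{\beta}h_{\alpha\overline{\gamma}}$; substituting \eqref{u0a} for $u_{\alpha,0}$ and \eqref{finu} for $f$ gives a relation among $f_\beta h_{\alpha\overline{\gamma}}$, $h_{\beta\overline{\gamma}}u_\alpha$, $h_{\beta\overline{\gamma}}A_{\alpha\sigma}u_{\overline{\sigma}}$, and $R_{\beta\overline{\gamma}\alpha}^{\sigma}u_\sigma$. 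Tracing over $\alpha$ and $\overline{\gamma}$ (contracting with $h^{\alpha\overline{\gamma}}$) collapses the first three terms into multiples of $u_\beta$ and $A_{\beta\sigma}u_{\overline{\sigma}}$ and turns $R_{\beta\overline{\gamma}\alpha}^{\sigma}u_\sigma h^{\alpha\overline{\gamma}}$ into the Ricci contraction $R_{\beta}^{\ \sigma}u_\sigma=R_{\beta\overline{\nu}}h^{\sigma\overline{\nu}}u_\sigma$, i.e.\ essentially $R_{\alpha\overline{\sigma}}u_\sigma$ after relabeling. One should also need to express $f_\beta$: differentiating \eqref{finu}, $f_\beta=-\tfrac14 u_\beta+\tfrac{\sqrt{-1}}{2}u_{0,\beta}$, and then feed in \eqref{u0a} again so that everything is written in terms of $u_\beta$ and $A_{\beta\sigma}u_{\overline{\sigma}}$. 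Collecting coefficients should yield precisely \eqref{ra}.

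The main obstacle will be bookkeeping the constants and the index contractions correctly — in particular keeping straight the factor conventions in the curvature term (the sign convention in the Remark after Proposition \ref{rule}) and making sure the contraction $h^{\alpha\overline{\gamma}}R_{\beta\overline{\gamma}\alpha}^{\sigma}$ is identified with $R_{\beta}^{\ \sigma}$ with the right normalization, since a stray factor would destroy the clean coefficient $m+1$. A secondary subtlety is that one must decide which holomorphic/antiholomorphic derivative to differentiate and in which slot; differentiating $u_{\alpha,\beta}=0$ and using the $u_{\alpha,\beta\overline\gamma}$ commutation rule is the route that most directly produces $R_{\beta\overline{\gamma}\alpha}^{\sigma}$, whereas differentiating \eqref{u0a} directly would instead bring in $u_{\alpha,0\overline\beta}$ and the torsion-derivative commutation rules, which is messier. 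I would therefore commit to the first route. No new geometric input is needed beyond Propositions \ref{rule} and \ref{crhessian}; the lemma is purely a consequence of integrability (commuting derivatives) applied to the overdetermined system.
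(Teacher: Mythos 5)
Your proposal is correct and follows essentially the same route as the paper: the paper also differentiates $u_{\alpha,\beta}=0$, applies the commutation formula $u_{\alpha,\beta\overline{\gamma}}=u_{\alpha,\overline{\gamma}\beta}+\sqrt{-1}h_{\beta\overline{\gamma}}u_{\alpha,0}-R_{\beta\overline{\gamma}\alpha}^{\sigma}u_{\sigma}$, substitutes the Hessian identities, traces, and then eliminates $u_{\alpha,0}$ via (\ref{au}). The only cosmetic difference is that the paper traces over $\beta,\overline{\gamma}$ rather than $\alpha,\overline{\gamma}$ (the two are equivalent by the first-index symmetry of the Webster curvature), and it also notes the even quicker observation that (\ref{ra}) is the Euler--Lagrange condition for equality being attained by $X=u_{\overline{\sigma}}T_{\sigma}$ in (\ref{c-hyp}).
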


\begin{proof}
This follows easily from the fact that equality is achieved by $X=u_{%
\overline{\sigma }}T_{\sigma }$ in (\ref{c-hyp}). We can also derive it in the following way.
Differentiating (\ref{u20}) and using (\ref{u11e}) yields%
\begin{eqnarray*}
0 &=&u_{\alpha ,\beta \overline{\gamma }} \\
&=&u_{\alpha ,\overline{\gamma }\beta }+\sqrt{-1}\delta _{\beta \gamma
}u_{\alpha ,0}-R_{\beta \overline{\gamma }\alpha \overline{\sigma }%
}u_{\sigma } \\
&=&\left( -\frac{1}{4}u_{\beta }+\frac{\sqrt{-1}}{2}u_{0,\beta }\right)
\delta _{\alpha \gamma }+\sqrt{-1}u_{\alpha ,0}\delta _{\beta \gamma
}-R_{\beta \overline{\gamma }\alpha \overline{\sigma }}u_{\sigma }.
\end{eqnarray*}%
Taking trace over $\beta $ and $\gamma $, we obtain%
\begin{eqnarray*}
0 &=&-\frac{1}{4}u_{\alpha }+\frac{\sqrt{-1}}{2}u_{0,\alpha }+m\sqrt{-1}%
u_{\alpha ,0}+R_{\alpha \overline{\sigma }}u_{\sigma } \\
&=&-\frac{1}{4}u_{\alpha }+\frac{\sqrt{-1}}{2}A_{\alpha \sigma }u_{\overline{%
\sigma }}+\left( m+\frac{1}{2}\right) \sqrt{-1}u_{\alpha ,0}+R_{\alpha 
\overline{\sigma }}u_{\sigma } \\
&=&-\frac{\left( m+1\right) }{2}u_{\alpha }+\left( m+1\right) \sqrt{-1}%
A_{\alpha \sigma }u_{\overline{\sigma }}+R_{\alpha \overline{\sigma }%
}u_{\sigma },
\end{eqnarray*}%
where in the last step we have used (\ref{au}) to replace $u_{\alpha ,0}$.
\end{proof}

\begin{lemma}
$Q$ is real and nonpositive.
\end{lemma}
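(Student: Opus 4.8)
The plan is to read the reality of $Q$ straight off the identity (\ref{ra}), and then to pin down its sign by testing the curvature hypothesis (\ref{c-hyp}) in a second, "rotated" direction.

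First I would contract (\ref{ra}) with $u_{\overline{\alpha}}$ and sum over $\alpha$. Since $A_{\alpha\sigma}$ is symmetric we have $\sqrt{-1}A_{\alpha\sigma}u_{\overline{\sigma}}u_{\overline{\alpha}}=Q$, and $u_{\alpha}u_{\overline{\alpha}}=|\partial_b u|^2$, so (\ref{ra}) becomes
\[
R_{\alpha\overline{\sigma}}u_{\sigma}u_{\overline{\alpha}}+(m+1)Q=\frac{m+1}{2}\,|\partial_b u|^2 .
\]
Here $R_{\alpha\overline{\sigma}}u_{\sigma}u_{\overline{\alpha}}$ is real, because the Webster Ricci tensor satisfies $\overline{R_{\alpha\overline{\sigma}}}=R_{\sigma\overline{\alpha}}$, and $|\partial_b u|^2$ is real; hence $Q$ is real. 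This uses only (\ref{ra}), which holds pointwise on all of $M$, so there is no issue at points where $\partial_b u=0$ (there $Q=0$ trivially).

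For the sign I would apply (\ref{c-hyp}), in the unitary-frame form given in the Remark after Theorem \ref{GL}, to the field $X=\sqrt{-1}\sum_{\alpha}u_{\alpha}T_{\alpha}\in T^{1,0}(M)$, i.e. take $f_{\alpha}=\sqrt{-1}u_{\alpha}$. The Hermitian term $R_{\alpha\overline{\sigma}}f_{\sigma}f_{\overline{\alpha}}$ is unchanged by this rotation and again equals $R_{\alpha\overline{\sigma}}u_{\sigma}u_{\overline{\alpha}}$, while the torsion term reverses sign, since $f_{\overline{\alpha}}f_{\overline{\sigma}}=-u_{\overline{\alpha}}u_{\overline{\sigma}}$ and $f_{\sigma}f_{\alpha}=-u_{\sigma}u_{\alpha}$; using that $Q$ is real it evaluates to $-(m+1)Q$. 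Since $|X|^2=|\partial_b u|^2$ and $\kappa=(m+1)/2$ by the normalization made at the start of this section, (\ref{c-hyp}) gives
\[
R_{\alpha\overline{\sigma}}u_{\sigma}u_{\overline{\alpha}}-(m+1)Q\geq\frac{m+1}{2}\,|\partial_b u|^2 .
\]
Solving the first displayed identity for $R_{\alpha\overline{\sigma}}u_{\sigma}u_{\overline{\alpha}}$ and substituting, the $|\partial_b u|^2$ terms cancel and one is left with $-2(m+1)Q\geq 0$, that is, $Q\leq 0$.

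I do not expect a real obstacle here: both steps are short computations. The only point is to recognize that the equality case of Theorem \ref{GL} already saturates (\ref{c-hyp}) exactly in the direction $\partial_b u$ (this is what (\ref{ra}) records), so that probing the same inequality in the companion complex direction $\sqrt{-1}\,\partial_b u$ — where it is still only an inequality — is precisely what isolates the sign of the torsion contribution $Q$.
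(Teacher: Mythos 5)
Your proposal is correct and follows essentially the same route as the paper: reality of $Q$ is read off from the contraction of (\ref{ra}) with $u_{\overline{\alpha}}$, and the sign comes from testing (\ref{c-hyp}) on a phase-rotated gradient direction. The paper uses the whole family $X=e^{it}u_{\overline{\alpha}}T_{\alpha}$, obtaining $R_{\alpha\overline{\sigma}}u_{\overline{\alpha}}u_{\sigma}+(m+1)Q\cos 2t\geq\frac{m+1}{2}\left\vert\partial_b u\right\vert^{2}$, whereas you specialize to $t=\pi/2$; the two are identical in substance.
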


\begin{remark}\label{aQ}
This lemma will not be needed in the proof of the rigidity. However, it
yields a quick proof if we assume the following extra condition%
\begin{equation*}
A_{\alpha \beta ,\overline{\alpha }\overline{\beta }}=0,
\end{equation*}%
i.e. the double divergence of the torsion is zero. Indeed, integrating by
parts and using (\ref{u20}) we obtain%
\begin{eqnarray*}
\int_{M}Q &=&-\sqrt{-1}\int_{M}A_{\alpha \beta ,\overline{\alpha }}u_{%
\overline{\beta }}u \\
&=&\frac{\sqrt{-1}}{2}\int_{M}A_{\alpha \beta ,\overline{\alpha }\overline{%
\beta }}u^{2} \\
&=&0.
\end{eqnarray*}%
As $Q$ is nonpositive, this implies that $Q=0$. Therefore $A=0$. See the
discussion on the torsion-free case below. 
\end{remark}

\begin{proof}
From (\ref{ra}) we \ have 
\begin{equation*}
R_{\alpha \overline{\sigma }}u_{\overline{\alpha }}u_{\sigma }+\left(
m+1\right) Q=\frac{m+1}{2}\left\vert \partial_b u\right\vert ^{2}.
\end{equation*}%
This shows that $Q$ is real. Taking conjugate, we also have $Q=-\sqrt{-1}A_{%
\overline{\alpha }\overline{\sigma }}u_{\alpha }u_{\sigma }$. In the
inequality (\ref{c-hyp}), taking $X=e^{it}u_{\overline{\alpha }}T_{\alpha }$ yields%
\begin{equation*}
R_{\alpha \overline{\sigma }}u_{\overline{\alpha }}u_{\sigma }+\left(
m+1\right) Q\cos 2t\geq \frac{m+1}{2}\left\vert \partial_b u\right\vert ^{2}.
\end{equation*}%
Therefore $Q\leq 0$.
\end{proof}

\bigskip

\bigskip Theorem \ref{Main} follows from

\begin{lemma}
\label{mainlem}The torsion $A=0$.
\end{lemma}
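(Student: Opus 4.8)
The plan is to show that the function $Q = \sqrt{-1}A_{\alpha\sigma}u_{\overline\alpha}u_{\overline\sigma}$, which we already know is real and nonpositive, actually vanishes identically; by \eqref{torc} this forces $A=0$ wherever $\partial_b u\neq 0$, and then a continuity/closure argument (using that the set $\{\partial_b u = 0\}$ has empty interior, since on an open set where $\partial_b u\equiv 0$ one would get $u$ constant there, contradicting $-\Delta_b u = \lambda_1 u$ with $\lambda_1>0$ unless $u\equiv 0$) gives $A\equiv 0$ on all of $M$. So everything reduces to proving $\int_M Q = 0$, which combined with $Q\le 0$ yields $Q\equiv 0$.

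First I would try to produce an integral identity for $\int_M Q$ by integrating by parts, imitating Remark \ref{aQ} but \emph{without} assuming the double divergence of the torsion vanishes. Write $Q = \sqrt{-1}A_{\alpha\sigma}u_{\overline\alpha}u_{\overline\sigma}$ and integrate by parts to move a derivative off $A$; the obstruction terms involve $A_{\alpha\sigma,\overline\alpha}$ contracted against derivatives of $u$. The key structural input is \eqref{au}, namely $\sqrt{-1}A_{\alpha\sigma}u_{\overline\sigma} = \tfrac12 u_\alpha + \sqrt{-1}u_{\alpha,0}$, together with \eqref{u20}, \eqref{u11}, \eqref{u0a}, \eqref{u00} and \eqref{ra}. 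The idea is that the overdetermined system is rigid enough that the ``bad'' divergence terms can be re-expressed, via these relations and further differentiation, in terms of quantities that integrate to zero or that cancel. In particular, differentiating \eqref{au} and taking suitable traces should express $A_{\alpha\sigma,\overline\alpha}$ (the divergence of the torsion) in terms of $u$-derivatives, at which point $\int_M Q$ collapses.

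The hard part will be the ``delicate integration by parts argument'' the introduction warns about: the manipulations are only valid away from the critical set $C = \{p : \partial_b u(p) = 0\}$, because \eqref{torc} — which expresses $A$ in terms of $u_\alpha u_\beta / |\partial_b u|^4$ — blows up there, and because dividing by $|\partial_b u|$ is needed to exploit the pointwise structure of $A$. So I expect to need: (i) a description of $C$ fine enough to justify integrating by parts on $M\setminus C$ (e.g. that $C$ has measure zero and that the relevant boundary integrals over tubular neighborhoods of $C$ vanish in the limit, using bounds on $u$ and its derivatives near $C$); (ii) possibly a cutoff function $\eta_\epsilon$ supported away from $C$, with the error terms $\int (\nabla\eta_\epsilon)(\cdots)$ shown to tend to $0$ as $\epsilon\to 0$. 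Controlling these error terms is where the real work lies, and it is presumably why the proof occupies two full sections.

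Once $\int_M Q = 0$ is established, the conclusion is immediate: $Q\le 0$ and $\int_M Q = 0$ give $Q\equiv 0$, hence by \eqref{torc} $A_{\alpha\beta} = 0$ at every point where $\partial_b u\neq 0$, hence $A\equiv 0$ on the closure of $\{\partial_b u\neq 0\}$, which is all of $M$. This proves Lemma \ref{mainlem}. (With $A=0$ in hand, one is reduced to the torsion-free situation, where the system \eqref{u20}–\eqref{u00} together with \eqref{ra} forces the CR structure to be spherical — this is the Chang--Chiu case, and it is what yields Theorem \ref{Main} and Theorem \ref{crot}.)
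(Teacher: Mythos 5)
Your overall reduction (show $\int_M Q=0$, combine with $Q\le 0$ to get $Q\equiv 0$, then use (\ref{torc}) plus density of $\{\partial_b u\neq 0\}$ to conclude $A\equiv 0$) is sound as a skeleton, but the core step is exactly where the proposal has no argument. Integrating $Q=\sqrt{-1}A_{\alpha\sigma}u_{\overline{\alpha}}u_{\overline{\sigma}}$ by parts, using $u_{\alpha,\beta}=0$, gives precisely
$\int_M Q=\frac{\sqrt{-1}}{2}\int_M A_{\alpha\beta,\overline{\alpha}\overline{\beta}}\,u^2$,
and this is the computation of Remark \ref{aQ}: it closes the argument \emph{only} under the extra hypothesis that the double divergence of the torsion vanishes. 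Your hope that ``differentiating (\ref{au}) and taking suitable traces should express $A_{\alpha\sigma,\overline{\alpha}}$ in terms of $u$-derivatives, at which point $\int_M Q$ collapses'' is not substantiated, and the derived identities (\ref{u00}) and (\ref{ra}) only yield information about contractions such as ${\Im}\,A_{\alpha\sigma,\overline{\alpha}}u_{\overline{\sigma}}$, not about $A_{\alpha\beta,\overline{\alpha}\overline{\beta}}$ itself. This is the precise obstruction the paper is built to overcome, so asserting that the bad terms ``cancel'' is a genuine gap, not a routine verification.

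The paper's actual route is different and worth internalizing. It works with $\psi=|A|$, which on $M\setminus K$ equals $-Q/|\partial_b u|^2$, and first proves the key pointwise identity ${\Re}\sum\psi_\alpha u_{\overline{\alpha}}=0$ (Lemma \ref{re0}) by applying the second integral formula of Lemma \ref{intfor} to $v=u_0$ and exploiting the structure $v_\alpha=\sqrt{-1}(2\psi+\tfrac12)u_\alpha$; the justification of the integration by parts across the critical set is exactly your item (ii), carried out via the capacity estimate coming from Lemma \ref{Haus}. Then, assuming $\psi\not\equiv 0$, it introduces the cutoff $F=\psi(\psi-\varepsilon)_+^2$ and integrates $F u^{2(k+1)}$ and $Fu^{2k}|\partial_b u|^2$ by parts: the point is that differentiating $u^{2k+1}$ produces a factor $1/(2k+1)$, so the surviving term involving $\mathrm{div}\,A=A_{\alpha\beta,\overline{\alpha}}$ is bounded by $C/(\varepsilon(2k+1))$ times the quantities being estimated, and choosing $k$ large forces $\int_M Fu^{2k}|\partial_b u|^2=0$, a contradiction. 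In other words, the divergence of the torsion is never computed or cancelled; it is merely bounded and then beaten by the large exponent $k$. Without this (or some equivalent new idea), your proposal does not close.
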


The proof of this statement will be presented in the next section.

Assuming this lemma, Theorem \ref{Main} then follows from Chang and Chiu \cite{CC1}. In
the following, we present a simpler and more direct argument. Since $A$
vanishes, we have 
\begin{eqnarray}
u_{0,\alpha } &=&u_{\alpha ,0}=\frac{\sqrt{-1}}{2}u_{\alpha },  \label{a0} \\
u_{\alpha ,\beta } &=&0,u_{0,0}=-\frac{1}{4}u \\
u_{\alpha ,\overline{\beta }} &=&\left( -\frac{1}{4}u+\frac{\sqrt{-1}}{2}%
u_{0}\right) \delta _{\alpha \beta }.
\end{eqnarray}

By Proposition \ref{hessian}, we obtain

\begin{proposition}
Let $D^{2}u$ be the Hessian of $u$ with respect to the Riemannian metric $%
g_{\theta }$. Then 
\begin{equation*}
D^{2}u=-\frac{1}{4}ug_{\theta }.
\end{equation*}
\end{proposition}

By Obata's theorem (Theorem \ref{OT}), $(M,g_{\theta})$ is isometric to the sphere $\mathbb{S}^{2m+1}$ with the metric $g_0=4g_c$, where $g_c$ is the
canonical metric. Without loss of generality, we can take $(M,g_{\theta})$ to be ($\mathbb{S}^{2m+1},g_0$). Then $\theta$ is a pseudohermitian structure on $\mathbb{S}^{2m+1}$  whose
adapted metric is $g_0$ and the associated Tanaka-Webster connection is torsion-free. It is a well known fact that the Reeb vector field $T$ is then a Killing vector field for  $g_0$ (this can be easily proved by the first formula in Remark \ref{one})  .
Therefore there exists a skew-symmetric matrix $A$ such that for all $X\in\mathbb{S}^{2m+1}, T(X)=AX$, here we use the obvious identification between 
$z=(z_1,\ldots,z_{m+1})\in \mathbb{C}^{m+1}$ and $X=(x_1,y_1,\ldots,x_{m+1},y_{m+1})\in \mathbb{R}^{2m+2}$. Changing coordinates by an orthogonal transformation we can assume that $A$ is of the following form
\[
A=\left[
\begin{array}
[c]{ccc}%
\begin{array}
[c]{cc}%
0 & a_{1}\\
a_{1} & 0
\end{array}
&  & \\
& \ddots & \\
&  &
\begin{array}
[c]{cc}%
0 & a_{m+1}\\
a_{m+1} & 0
\end{array}
\end{array}
\right]
\]
where $a_i\geq 0$. Therefore 
\[
T=\sum_{i}a_{i}\left(  y_{i}\frac{\partial}{\partial x_{i}}-x_{i}%
\frac{\partial}{\partial y_{i}}\right)
\]
Since $T$ is of unit length we must have 
\[4\sum_{i}a_{i}^2(x_i^2+y_i^2)=1\]
on $\mathbb{S}^{2m+1}$. Therefore all the $a_i$'s are equal to $1/2$. It follows that 
$$\theta=g_0(T,\cdot)=2\sqrt{-1}\overline{\partial}(|z|^2-1).$$
This finishes the proof of Theorem 4.

\section{Proof of Lemma \protect\ref{mainlem}}

Let $\psi =\left\vert A\right\vert $. In local unitary frame%
\begin{equation*}
\psi =\sqrt{\sum \left\vert A_{\alpha \beta }\right\vert ^{2}}.
\end{equation*}%
We note that $\psi $ is continuous and $\psi ^{2}$ is smooth. Let $K=\left\{
\partial_b u=0\right\} $. By (\ref{torc}), on $M\backslash K$\  $\psi $ is
smooth and 
\begin{equation*}
\psi =-\frac{Q}{\left\vert \partial_b u\right\vert ^{2}},
\end{equation*}%
or 
\begin{equation*}
A_{\alpha \beta }=\sqrt{-1}\psi \frac{u_{\alpha }u_{\beta }}{\left\vert
\partial_b u\right\vert ^{2}}.
\end{equation*}

\begin{lemma}
\label{Haus}The compact set $K$ is of Hausdorff dimension at most $n-2$ ($%
n=2m+1=\dim M)$. More precisely we have a countable union $K=\cup
_{i=1}^{\infty }E_{i}$, where each $E_{i}$ has finite $n-2$ dimensional
Hausdorff measure: $\mathcal{H}^{n-2}\left( E_{i}\right) <\infty $.
\end{lemma}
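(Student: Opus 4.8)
The plan is to exhibit $K=\{\partial_b u=0\}$ as a countable union of sets controlled by the vanishing of finitely many real analytic–type quantities, and then invoke the standard fact that the zero set of a smooth function which does not vanish to infinite order, or more robustly a set where a nonnegative smooth function vanishes together with its gradient, has the expected codimension. Concretely, note first that $K=\{z\in M: u_\alpha(z)=0 \text{ for all }\alpha\}$, so $K=\{|\partial_b u|^2=0\}$ where $|\partial_b u|^2=\sum_\alpha u_\alpha u_{\overline\alpha}$ is a nonnegative smooth function. At a point $p\in K$ the gradient of $|\partial_b u|^2$ in the horizontal directions involves $u_{\alpha,\overline\beta}$ and $u_{\alpha,\beta}$ contracted with $u_\beta$, hence vanishes; but using the Hessian identities \eqref{u20} and \eqref{u11} one sees that at $p\in K$ the full Riemannian Hessian $D^2(|\partial_b u|^2)$, or equivalently the behaviour of $u$ itself near $p$, is rigidly pinned down. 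The cleanest route is: $K$ is exactly the critical set of $u$ in the CR (horizontal) sense; I would instead work with the full Riemannian critical set. So the first step is to record that on $K$ all the $u_\alpha$ vanish and, differentiating, to see what second-order information is forced.

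The key technical step is a stratification argument. Since $u$ is a smooth eigenfunction, $\{du=0\}$ (the Riemannian critical set) is contained in a countable union of submanifolds of codimension $\geq 1$, but we need codimension $\geq 2$. The point is that on the CR sphere the corresponding function is (up to the CR equivalence) a first eigenfunction whose critical set is a single great sphere $\mathbb S^{2m-1}$, of codimension $2$; the overdetermined system forces the same local picture. I would argue as follows: at $p\in K$, from \eqref{u11} we have $u_{\alpha,\overline\beta}(p)=(-\tfrac14 u(p))\delta_{\alpha\beta}$ (the $u_0$ term also enters; one checks $u_0(p)$ is real-constrained), so the complex Hessian of $u$ at $p$ is a nonzero multiple of the identity unless $u(p)=0$. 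Splitting $K=K_0\cup K_*$ with $K_0=\{p\in K: u(p)=0\}$ and $K_*=K\setminus K_0$: on $K_*$ the Hessian of $u$ restricted to $H(M)$ is nondegenerate, hence by the implicit function theorem $K_*$ is locally contained in the common zero set of the $m$ functions $\Re u_\alpha$ and $\Im u_\alpha$ cut down appropriately — more carefully, near such $p$ the map $q\mapsto (u_1(q),\dots,u_m(q))$ has rank $\geq$ enough to force $K$ near $p$ to lie in a submanifold of codimension $\geq 2$. For $K_0$ one uses in addition that $du(p)$ involves $u_0(p)$; combining $u(p)=0$ with $u_\alpha(p)=0$ gives codimension at least $2$ directly once one checks $u_0(p)=0$ cannot fail to be compensated, or one iterates. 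Covering the compact $K$ by finitely many such coordinate neighbourhoods and writing each local piece as a countable union of compact subsets of these codimension-$\geq 2$ submanifolds gives the $E_i$ with $\mathcal H^{n-2}(E_i)<\infty$.

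The main obstacle I expect is handling the locus $K_0=\{u=0\}\cap K$ cleanly: here the complex Hessian degenerates at the same points where the gradient vanishes, so the naive implicit function theorem argument on $u_\alpha$ alone does not immediately yield codimension $2$, and one must bring in the structure of the full system \eqref{u20}--\eqref{u00} (in particular the equations for $u_{0,\alpha}$ and $u_{0,0}$) to see that $u$ cannot vanish to high order along a set of codimension $1$. A robust way around this is to prove instead that $\nabla u$ (Riemannian gradient) together with its first covariant derivative cannot both degenerate along a codimension-$1$ set — i.e. that $u$ is a Morse–Bott type function whose critical manifold has codimension $2$ — which follows from the fact that $D^2 u=-\tfrac14 u\,g_\theta$ would hold if $A=0$ but here $A\neq 0$ only perturbs this by terms vanishing on $K$ by \eqref{torc}; so on $K$ the Hessian is still $-\tfrac14 u\, g_\theta$ plus lower-order, forcing the clean transversality. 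Once the codimension bound $n-2$ is in hand, the decomposition into $E_i$ with finite $\mathcal H^{n-2}$-measure is automatic by taking a locally finite cover and intersecting with closed balls.
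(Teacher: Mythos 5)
Your first stratum is essentially the paper's argument, but your decomposition is slightly off and your treatment of the degenerate stratum has a genuine gap. The paper splits $K=K_1\sqcup K_2$ with $K_1=\{p\in K: u(p)\neq 0 \text{ or } u_0(p)\neq 0\}$ and $K_2=\{u=0,\ du=0\}$. On $K_1$ the matrix $u_{\alpha,\overline\beta}=\left(-\tfrac14 u+\tfrac{\sqrt{-1}}2u_0\right)\delta_{\alpha\beta}$ is a \emph{nonzero} multiple of the identity (you only use $u\neq 0$; you should also include $u_0\neq 0$, where the same implicit-function-theorem argument on two of the real derivatives $Z_iu$ works verbatim), and this yields local codimension $2$. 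So far your plan agrees with the paper.

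The gap is in $K_0\cap\{u_0=0\}$, i.e.\ the singular nodal set $\{u=0,\ du=0\}$. Your proposed ``Morse--Bott'' workaround --- that $\nabla u$ and its first covariant derivative cannot simultaneously degenerate along a codimension-$1$ set --- is false at exactly these points: using Proposition \ref{hessian} together with (\ref{u20})--(\ref{u00}), one computes that on $K$ the full Riemannian Hessian is $D^2u(T_\alpha,T_{\overline\beta})=-\tfrac14u\,\delta_{\alpha\beta}$, $D^2u(T_\alpha,T_\beta)=A_{\alpha\beta}u_0$, $D^2u(T,T_\alpha)=u_{\alpha,0}=0$, $D^2u(T,T)=-\tfrac14u$, so at a point where $u=u_0=0$ the \emph{entire} $2$-jet of $u$ vanishes and no second-order transversality is available. (Also, your appeal to (\ref{torc}) to kill the torsion terms on $K$ is illegitimate: (\ref{torc}) only holds where $\partial_bu\neq 0$, and $|A_{\alpha\beta}|\le\psi$ gives no vanishing on $K$.) The missing ingredient is measure-theoretic, not transversality-theoretic: from Proposition \ref{crhessian} one gets the second-order elliptic equation $\Delta u=-\left(\tfrac m2+\tfrac14\right)u+\tfrac4m\,\Im A_{\alpha\sigma,\overline\alpha}u_{\overline\sigma}$, and the Han--Hardt--Lin estimate \cite{HHL} for the singular nodal set of solutions of such equations gives $\mathcal H^{n-2}(K_2)<\infty$ directly, with no claim that $u$ vanishes only to finite fixed order pointwise. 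Without invoking a result of this type, your argument does not close.
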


\begin{remark}
Here the Hausdorff dimension is defined using the distance function of the
Riemannian metric $g_{\theta }$.
\end{remark}

\begin{proof}
We have $K=K_{1}\sqcup K_{2}$, where%
\begin{equation*}
K_{1}=\left\{ p\in K:u\left( p\right) \neq 0\text{ or }u_{0}\left( p\right)
\neq 0\right\} ,K_{2}=\left\{ p\in M:u\left( p\right) =0\text{ and }du\left(
p\right) =0\right\} .
\end{equation*}%
We first prove that $K_{1}$ is of Hausdorff dimension $n-2$. Suppose $p\in
K_{1}$. In a local unitary frame $\left\{ T_{\alpha }\right\} $ we have by
Proposition \ref{crhessian} 
\begin{equation*}
u_{\alpha ,\overline{\beta }}=\left( -\frac{1}{4}u+\frac{\sqrt{-1}}{2}%
u_{0}\right) \delta _{\alpha \beta }.
\end{equation*}%
We write $T_{\alpha }=X_{\alpha }-\sqrt{-1}Y_{\alpha }$ in terms of the real
and imaginary parts. Then we have $2m$ real local vector fields $\left\{
Z_{i}\right\} $, where $Z_{i}=X_{i}$ for $i\leq m$ and $Z_{i}=Y_{i-m}$ for $%
i>m$. Along $K_{1}$ the above equation takes the following form%
\begin{eqnarray*}
X_{\beta }X_{\alpha }u+Y_{\beta }Y_{\alpha }u &=&-\frac{1}{4}u\delta
_{\alpha \beta }, \\
Y_{\beta }X_{\alpha }u-X_{\beta }Y_{\alpha }u &=&\frac{1}{2}u_{0}\delta
_{\alpha \beta }.
\end{eqnarray*}%
Since either $u\left( p\right) \neq $ or $u_{0}\left( p\right) \neq 0$, from
the above equation it is straightforward to check that there exists $i<j$
such that the local map $F:q\rightarrow \left( Z_{i}u\left( q\right)
,Z_{j}u\left( q\right) \right) $ from $M$ to $\mathbb{R}^{2}$ is of rank $2$
at $p$. By the implicit function theorem, $F^{-1}\left( 0\right) $ is a
codimension $2$ submanifold at $p$. As $K_{1}\subset F^{-1}\left( 0\right) $%
, we conclude that $K_{1}$ is of Hausdorff dimension at most $n-2$.

To handle $K_{2}$, we note that $u$ satisfies the following 2nd order
elliptic equation by Proposition \ref{crhessian}%
\begin{equation*}
\Delta u=-\left( \frac{m}{2}+\frac{1}{4}\right) u+\frac{4}{m}{\Im}%
A_{\alpha \sigma ,\overline{\alpha }}u_{\overline{\sigma }}.
\end{equation*}%
As $K_{2}$ is the singular nodal set of $u$, we have (see, e.g. \cite{HHL}) 
\begin{equation*}
\mathcal{H}^{n-2}\left( K_{2}\right) <\infty .
\end{equation*}
\end{proof}

\begin{lemma}
\label{re0}We have on $M\backslash K$ 
\begin{equation*}
{\Re}\sum \psi _{\alpha }u_{\overline{\alpha }}=0.
\end{equation*}
\end{lemma}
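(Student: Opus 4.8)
The plan is to differentiate the torsion formula (\ref{torc}) on $M\backslash K$ and extract a real scalar identity by contracting with the right tensors. On $M\backslash K$ we have $A_{\alpha\beta}=\sqrt{-1}\psi\, u_\alpha u_\beta/|\partial_b u|^2$, where $\psi$ is smooth and positive there. The natural thing is to compute the divergence $A_{\alpha\beta,\overline\beta}$ in two ways: once directly from this formula using the Leibniz rule, and once using the covariant derivatives of $u$ already pinned down in Proposition \ref{crhessian} (namely $u_{\alpha,\beta}=0$, $u_{\alpha,\overline\beta}=f\delta_{\alpha\beta}$ with $f=-\tfrac14 u+\tfrac{\sqrt{-1}}{2}u_0$) together with the formula (\ref{au}) relating $A_{\alpha\sigma}u_{\overline\sigma}$ to $u_\alpha$ and $u_{\alpha,0}$. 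Comparing the two expressions should isolate a term proportional to $\psi_{\overline\beta}u_\beta$ (or its conjugate), and the claim ${\Re}\sum\psi_\alpha u_{\overline\alpha}=0$ will drop out of the imaginary/real decomposition.

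Concretely, first I would differentiate (\ref{torc}) covariantly in the $\overline\beta$ direction and contract over $\beta$. Because $\partial_b u\neq 0$ on $M\backslash K$, dividing by $|\partial_b u|^2$ is legitimate, and $(|\partial_b u|^2)_{,\overline\beta}=u_{\alpha,\overline\beta}u_{\overline\alpha}+u_\alpha u_{\overline\alpha,\overline\beta}$, which by (\ref{u11}) and the commutation rules of Proposition \ref{rule} is expressible purely in terms of $u$, $u_0$, $f$, and $A_{\overline\alpha\overline\beta}u_\alpha$; the latter is $\overline{Q}/(\cdots)$-type data, i.e. again controlled by $\psi$ and $u$. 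On the other side, I would use the second Bianchi-type identity for the torsion divergence, or simply the already-derived relation (\ref{u00})/(\ref{au}) which packages $A_{\alpha\sigma,\overline\alpha}u_{\overline\sigma}$. Equating and separating real and imaginary parts, the coefficient of the "new" quantity $\psi_{\alpha}u_{\overline\alpha}$ must vanish; since $Q=-\psi|\partial_b u|^2$ is real (by the preceding lemma) and $|\partial_b u|^2$ is real, the real part of the cross term $\psi_\alpha u_{\overline\alpha}$ is forced to be zero.

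The main obstacle I anticipate is bookkeeping: there are several commutation formulas in Proposition \ref{rule} involving the curvature $R_{\beta\overline\gamma\alpha}{}^\sigma$ and the torsion derivatives $A_{\overline\beta\overline\gamma,\alpha}$, and one must check that all the curvature terms cancel using (\ref{ra}) (which relates $R_{\alpha\overline\sigma}u_\sigma$ to $A_{\alpha\sigma}u_{\overline\sigma}$ and $u_\alpha$). Keeping careful track of which contractions produce $\psi$ versus $\psi_\alpha$ versus derivatives of $u$ is the delicate point; I expect the identity to collapse to something of the shape $(\text{smooth, real, positive})\cdot{\Re}\sum\psi_\alpha u_{\overline\alpha}=0$ on $M\backslash K$, whence the conclusion. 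A cleaner alternative, which I would try first, is to note $\psi|\partial_b u|^2=-Q=-\sqrt{-1}A_{\alpha\sigma}u_{\overline\alpha}u_{\overline\sigma}$ is (up to sign) a manifestly real smooth function on $M\backslash K$; differentiating this product and using (\ref{u20}), (\ref{u11}), (\ref{au}) to compute $\nabla(A_{\alpha\sigma}u_{\overline\alpha}u_{\overline\sigma})$ directly, then taking the real part of the $T_\alpha$-derivative contracted with $u_{\overline\alpha}$, should give the result with less curvature gymnastics.
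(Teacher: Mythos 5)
There is a genuine gap: your strategy is local/pointwise, but the lemma is not a pointwise consequence of the identities in Proposition \ref{crhessian} --- it is a global statement, and the paper proves it by a global integral argument. Concretely, if you carry out the differentiations you propose, everything closes back on itself without determining ${\Re}\sum\psi_\alpha u_{\overline{\alpha}}$. For instance, writing $w=\vert\partial_b u\vert^2$ and using $u_{\alpha,\beta}=0$, $u_{\alpha,\overline{\beta}}=f\delta_{\alpha\beta}$ and $A_{\alpha\sigma}u_{\overline{\sigma}}=\sqrt{-1}\psi u_\alpha$, the divergence of (\ref{torc}) contracts to $A_{\alpha\beta,\overline{\beta}}u_{\overline{\alpha}}=\sqrt{-1}\left(\psi_{\overline{\beta}}u_\beta+m\psi f\right)$; taking imaginary parts merely reproduces (\ref{u00}), in which $u_{0,0}$ is itself an undetermined quantity (the equality case constrains only the horizontal Hessian), so no equation for $a:={\Re}(\psi_\alpha u_{\overline{\alpha}})/w$ results. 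Your ``cleaner alternative'' of differentiating $\psi w=-Q$ in the $T_\alpha$ direction runs into the same wall: it produces the term $A_{\gamma\sigma,\alpha}u_{\overline{\gamma}}u_{\overline{\sigma}}u_{\overline{\alpha}}$, and the unbarred covariant derivative of the torsion is not controlled by any of the available identities. The only pointwise information one can extract this way is of the type $\psi_\beta u_\alpha=\psi_\alpha u_\beta$ (hence $\psi_\alpha=(a+ib)u_\alpha$), which the paper does use, but which leaves $a$ free.

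What the paper actually does is apply the second integral identity of Lemma \ref{intfor} to the function $v=u_0$. Using (\ref{u0a}) and (\ref{torc}) one gets $v_\alpha=\sqrt{-1}\left(2\psi+\tfrac12\right)u_\alpha$ on $M\backslash K$, the symmetry $v_{\alpha,\beta}=v_{\beta,\alpha}$ gives $\psi_\alpha=(a+ib)u_\alpha$, and after substituting all terms and integrating by parts the identity collapses to $\tfrac{16}{m}\int_M a^2\vert\partial_b u\vert^4=0$, which forces $a\equiv 0$. Note that this requires integrating over all of $M$ while the integrands are only smooth on $M\backslash K$; the integration by parts is justified because $K$ has zero $2$-capacity, which is exactly why Lemma \ref{Haus} is proved first. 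So the essential missing ingredient in your proposal is the passage to a global identity (and the accompanying treatment of the critical set); no amount of curvature bookkeeping will make the conclusion drop out locally.
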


\begin{proof}
Let $v=Tu=u$\-$_{0}$. By the second formula of Lemma \ref{intfor} 
\begin{equation}
\begin{split}
\int_{M}\sqrt{-1}\left( v_{\overline{\beta }}v_{\beta ,0}-v_{\beta }v_{%
\overline{\beta },0}\right) =& \int_{M}\frac{1}{m}\left( \Delta _{b}v\right)
^{2}-\frac{4}{m}\left\vert \sum v_{\alpha ,\overline{\alpha }}\right\vert
^{2} \\
& -\sqrt{-1}\left( A_{\alpha \beta }v_{\overline{\alpha }}v_{\overline{\beta 
}}-A_{\overline{\alpha }\overline{\beta }}v_{\alpha }v_{\beta }\right).
\label{bfv}
\end{split}
\end{equation}
We will use Proposition \ref{crhessian} to simplify both sides.
On $M\backslash K$%
\begin{eqnarray*}
v_{\alpha } &=&u_{0,\alpha }=\sqrt{-1}\left( 2\psi +\frac{1}{2}\right)
u_{\alpha }, \\
v_{\alpha ,\overline{\beta }} &=&\sqrt{-1}2\psi _{\overline{\beta }%
}u_{\alpha }+\left( 2\psi +\frac{1}{2}\right) \left( -\frac{1}{2}u_{0}-\frac{%
\sqrt{-1}}{4}u\right) \delta _{\alpha \beta }.
\end{eqnarray*}%
Differentiating the first equation yields $v_{\alpha ,\beta }=2\sqrt{-1}\psi
_{\beta }u_{\alpha }$. As $v_{\alpha ,\beta }=v_{\beta ,\alpha }$, we have $%
\psi _{\beta }u_{\alpha }=\psi _{\alpha }u_{\beta }$. As a result, on $%
M\backslash K$ there are smooth real functions $a,b$ such that
\begin{equation*}
\psi _{\alpha }=\left( a+ib\right) u_{\alpha }.
\end{equation*}%
A simple calculation shows 
\begin{eqnarray*}
\sqrt{-1}\left( v_{\overline{\beta }}v_{\beta ,0}-v_{\beta }v_{\overline{%
\beta },0}\right) &=&\sqrt{-1}\left( 2\psi +\frac{1}{2}\right) ^{2}\left( u_{%
\overline{\beta }}u_{\beta ,0}-u_{\beta }u_{\overline{\beta },0}\right) \\
&=&-\left( 2\psi +\frac{1}{2}\right) ^{2}\left( 2\psi +1\right) \left\vert
\partial _{b}u\right\vert ^{2}.
\end{eqnarray*}%
The integrand of the right hand side can be simplified as following%
\begin{eqnarray*}
&&\frac{1}{m}\left( \Delta _{b}v\right) ^{2}-\frac{4}{m}\left\vert \sum
v_{\alpha ,\overline{\alpha }}\right\vert ^{2}-\sqrt{-1}\left( A_{\alpha
\beta }v_{\overline{\alpha }}v_{\overline{\beta }}-A_{\overline{\alpha }%
\overline{\beta }}v_{\alpha }v_{\beta }\right) \\
&=&-\frac{4}{m}\left\vert {\Im}\sum v_{\alpha ,\overline{\alpha }%
}\right\vert ^{2}-\sqrt{-1}\left( A_{\alpha \beta }v_{\overline{\alpha }}v_{%
\overline{\beta }}-A_{\overline{\alpha }\overline{\beta }}v_{\alpha
}v_{\beta }\right) \\
&=&-\frac{4}{m}\left( 2a\left\vert \partial _{b}u\right\vert ^{2}-\frac{m}{4}%
\left( 2\psi +\frac{1}{2}\right) u\right) ^{2}-\left( 2\psi +\frac{1}{2}%
\right) ^{2}2\psi \left\vert \partial _{b}u\right\vert ^{2} \\
&=&-\frac{m}{4}\left( 2\psi +\frac{1}{2}\right) ^{2}u^{2}+4a\left\vert
\partial _{b}u\right\vert ^{2}\left( 2\psi +\frac{1}{2}\right) u-\frac{16}{m}%
a^{2}\left\vert \partial _{b}u\right\vert ^{4} \\
&&-\left( 2\psi +\frac{1}{2}\right) ^{2}2\psi \left\vert \partial
_{b}u\right\vert ^{2}.
\end{eqnarray*}%
Integrating by parts (see the remark below) yields%
\begin{eqnarray*}
\int -\frac{m}{4}\left( 2\psi +\frac{1}{2}\right) ^{2}u^{2} &=&{\Re}\int
\left( 2\psi +\frac{1}{2}\right) ^{2}u_{\alpha ,\overline{\alpha }}u \\
&=&-\int \left( 2\psi +\frac{1}{2}\right) ^{2}\left\vert \partial
_{b}u\right\vert ^{2}-4{\Re}\int \left( 2\psi +\frac{1}{2}\right) u\psi
_{\overline{\alpha }}u_{\alpha } \\
&=&-\int \left( 2\psi +\frac{1}{2}\right) ^{2}\left\vert \partial
_{b}u\right\vert ^{2}-4\int \left( 2\psi +\frac{1}{2}\right) au\left\vert
\partial _{b}u\right\vert ^{2}.
\end{eqnarray*}%
Plugging these calculations in (\ref{bfv}), we obtain%
\begin{equation*}
\frac{16}{m}\int_{M}a^{2}\left\vert \partial _{b}u\right\vert ^{4}=0.
\end{equation*}%
Therefore ${\Re}\sum \psi _{\alpha }u_{\overline{\alpha }}=a\left\vert
\partial _{b}u\right\vert ^{2}=0$.
\end{proof}

\begin{remark} 
We can justify the integration by parts in the following way.
We note that the compact set $K$
has zero $2$-capacity by Lemma \ref{Haus} (cf. \cite{EG, HKM}). Therefore
there exists a sequence $\chi _{k}\in C_{c}^{\infty }\left( M\backslash
K\right) $ s.t. $\chi _{k}\rightarrow 1$ in $W^{1,2}\left( M\right) $. Then%
\begin{eqnarray*}
\begin{split}
-\int \left( 2\phi +\frac{1}{2}\right) ^{2}u_{\alpha ,\overline{\alpha }%
}u\left( \chi _{k}\right) ^{2} =& \int \left( 2\phi +\frac{1}{2}\right)
^{2}\left\vert \partial _{b}u\right\vert ^{2}\left( \chi _{k}\right) ^{2} \\
&+2\int \left( 2\phi +\frac{1}{2}\right) \phi _{\overline{\alpha }%
}u_{\alpha }u\left( \chi _{k}\right) ^{2}+E_{k}, \label{apx}
\end{split}
\end{eqnarray*}
where 
\begin{equation*}
E_{k}=2\int \left( 2\phi +\frac{1}{2}\right) ^{2}u_{\alpha }u\chi _{k}\left(
\chi _{k}\right) _{\overline{\alpha }}.
\end{equation*}%
It is easy to see that $\lim_{k\rightarrow \infty }E_{k}=0$. Therefore,
letting $k\rightarrow \infty $ yields%
\begin{equation*}
-\int \left( 2\phi +\frac{1}{2}\right) ^{2}u_{\alpha ,\overline{\alpha }%
}u=\int \left( 2\phi +\frac{1}{2}\right) ^{2}\left\vert \partial
_{b}u\right\vert ^{2}+4\int \left( 2\phi +\frac{1}{2}\right) \phi _{%
\overline{\alpha }}u_{\alpha }u.
\end{equation*}
\end{remark}

\bigskip

We now prove Lemma \ref{mainlem}. Suppose $\psi ^{2}$ is not identically
zero. Let $\varepsilon ^{2}$ be a regular value of $\psi ^{2}$ such that  $\left\{
\psi \geq \varepsilon \right\} $ is a nonempty domain with smooth boundary.
Define 
\begin{equation*}
F=\left\{ 
\begin{array}{cc}
\psi \left( \psi -\varepsilon \right) ^{2}, & \text{if }\psi \geq
\varepsilon ; \\ 
0 & \text{if }\psi <\varepsilon .%
\end{array}%
\right.
\end{equation*}%
Then $F\in W^{1,2}\left( M\right) $. Integrating by parts, we obtain%
\begin{equation}
\begin{split}
\int_{M}Fu^{2\left( k+1\right) } =&-\frac{4}{m}{\Re}\int_{M}F\left\vert
u\right\vert ^{2k+1}u_{\alpha ,\overline{\alpha }} \\
=&\frac{4\left( 2k+1\right) }{m}{\Re}\int_{M}Fu^{2k}\left\vert \partial
_{b}u\right\vert ^{2} \\
&+\frac{4}{m}\int_{\left\{ \psi \geq \varepsilon \right\} }\left( 3\psi
^{2}-4\varepsilon \psi +\varepsilon ^{2}\right) \left\vert u\right\vert
^{2k+1}{\Re}u_{\alpha }\psi _{\overline{\alpha }} \\
=&\frac{4\left( 2k+1\right) }{m}\int_{M}Fu^{2k}\left\vert \partial
_{b}u\right\vert ^{2},
\end{split}
\label{fu}
\end{equation}%
by Lemma \ref{re0}. Integrating by parts again, we have%
\begin{eqnarray*}
&&\int_{M}Fu^{2k}\left\vert \partial _{b}u\right\vert ^{2} \\
&=&\int_{M}u^{2k}\left( \psi -\varepsilon \right) _{+}^{2}\psi \left\vert
\partial _{b}u\right\vert ^{2} \\
&=&-{\Re}\sqrt{-1}\int_{M}u^{2k}\left( \psi -\varepsilon \right)
_{+}^{2}A_{\alpha \beta }u_{\overline{\alpha }}u_{\overline{\beta }} \\
&=&-{\Re}\frac{\sqrt{-1}}{2k+1}\int_{M}\left( \psi -\varepsilon \right)
_{+}^{2}A_{\alpha \beta }\left( u^{2k+1}\right) _{\overline{\alpha }}u_{%
\overline{\beta }} \\
&=&{\Re}\frac{\sqrt{-1}}{2k+1}\left( \int_{M}\left( \psi -\varepsilon
\right) _{+}^{2}A_{\alpha \beta ,\overline{\alpha }}u^{2k+1}u_{\overline{%
\beta }}+2\int_{\left\{ \psi \geq \varepsilon \right\} }\left( \psi
-\varepsilon \right) u^{2k+1}\psi _{\overline{\alpha }}A_{\alpha \beta }u_{%
\overline{\beta }}\right) \\
&=&{\Re}\frac{\sqrt{-1}}{2k+1}\int_{M}\left( \psi -\varepsilon \right)
_{+}^{2}A_{\alpha \beta ,\overline{\alpha }}u^{2k+1}u_{\overline{\beta }}-%
\frac{2}{2k+1}\int_{\left\{ \psi \geq \varepsilon \right\} }\left( \psi
-\varepsilon \right) \psi u^{2k+1}{\Re}\psi _{\overline{\alpha }%
}u_{\alpha } \\
&=&{\Re}\frac{\sqrt{-1}}{2k+1}\int_{M}\left( \psi -\varepsilon \right)
_{+}^{2}A_{\alpha \beta ,\overline{\alpha }}u^{2k+1}u_{\overline{\beta }},
\end{eqnarray*}%
by Lemma \ref{re0} again. Let $C$ be super norm of $\mathrm{div}A=A_{\alpha
\beta ,\overline{\alpha }}$. Then by the H\"older inequality%
\begin{eqnarray*}
\int_{M}Fu^{2k}\left\vert \partial _{b}u\right\vert ^{2} &\leq &\frac{C}{%
\varepsilon \left( 2k+1\right) }\int_{M}F\left\vert u\right\vert
^{2k+1}\left\vert \partial _{b}u\right\vert \\
&\leq &\frac{C}{\varepsilon \left( 2k+1\right) }\left( \int_{M}F\left\vert
u\right\vert ^{2\left( k+1\right) }\right) ^{1/2}\left( \int_{M}F\left\vert
u\right\vert ^{2k}\left\vert \partial _{b}u\right\vert ^{2}\right) ^{1/2}.
\end{eqnarray*}%
Hence%
\begin{eqnarray*}
\int_{M}Fu^{2k}\left\vert \partial _{b}u\right\vert ^{2} &\leq &\left[ \frac{%
C}{\varepsilon \left( 2k+1\right) }\right] ^{2}\int_{M}F\left\vert
u\right\vert ^{2\left( k+1\right) } \\
&\leq &\frac{4C^{2}}{\varepsilon ^{2}m\left( 2k+1\right) }%
\int_{M}Fu^{2k}\left\vert \partial _{b}u\right\vert ^{2},
\end{eqnarray*}%
where in the last step we used (\ref{fu}). Choosing $k$ such that  $\frac{4C^{2}}{%
\varepsilon ^{2}m\left( 2k+1\right) }\leq \frac{1}{2}$ yields%
\begin{equation*}
\int_{M}Fu^{2k}\left\vert \partial _{b}u\right\vert ^{2}=0.
\end{equation*}%
This is a contradiction. Therefore Lemma \ref{mainlem} is proved. 



Inspecting the proof of the rigidity, it is clear that we only need to have
a non-constant function $u$ satisfying (\ref{u20}) and (\ref{u11}) as all the other identities used in
the proof are derived from these two. In summary, we have proved the
following theorem. 

\begin{theorem}\label{rig}
Let $M$ be a closed pseudohermitian manifold of dimension $2m+1\geq 5$.
Suppose there exists  a non-constant function $u\in C^{\infty }\left(
M\right) $ satisfying 
\begin{eqnarray*}
u_{\alpha ,\beta } &=&0, \\
u_{\alpha ,\overline{\beta }} &=&\left( -\frac{1}{4}u+\frac{\sqrt{-1}}{2}%
u_{0}\right) \delta _{\alpha \beta }.
\end{eqnarray*}%
Then $M$ is equivalent to the sphere $(\mathbb{S}^{2m+1},2\sqrt{-1}\overline{\partial}(|z|^2-1))$. 
\end{theorem}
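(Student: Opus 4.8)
The plan is to derive, from the two given equations $u_{\alpha,\beta}=0$ and $u_{\alpha,\overline\beta}=(-\tfrac14 u+\tfrac{\sqrt{-1}}{2}u_0)\delta_{\alpha\beta}$, all the consequences recorded in Proposition \ref{crhessian} and the lemmas that follow, then invoke Lemma \ref{mainlem} to conclude that the torsion $A$ vanishes, and finally reproduce the torsion-free argument that identifies $(M,\theta)$ with the standard CR sphere. First I would check that $u$ is automatically non-constant in a way that makes the argument go: if $u$ were constant, the second equation forces $u\equiv 0$; conversely a non-constant $u$ cannot be handled trivially, so the given hypothesis is exactly what is needed. The point, as the paragraph preceding the statement observes, is that equations (\ref{u20}) and (\ref{u11}) are the \emph{only} inputs used anywhere in Section 4 and Section 5: the relations (\ref{u0a}), (\ref{u00}), (\ref{torc}), (\ref{ra}), the nonpositivity of $Q$, and Lemmas \ref{Haus} and \ref{re0} are all derived purely by differentiating these two identities and using the commutation formulas of Proposition \ref{rule}. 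So the bulk of the proof is simply the remark ``inspect the earlier proof''.

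Concretely, I would proceed as follows. Step one: observe that differentiating $u_{\alpha,\beta}=0$ and commuting derivatives via the last formula of Proposition \ref{rule} gives $A_{\alpha\gamma}u_\beta=A_{\alpha\beta}u_\gamma$, hence (\ref{torc}) on $\{\partial_b u\neq 0\}$ with $Q=\sqrt{-1}A_{\alpha\sigma}u_{\overline\alpha}u_{\overline\sigma}$. Step two: differentiate $u_{\alpha,\overline\beta}=f\delta_{\alpha\beta}$ with $f=-\tfrac14 u+\tfrac{\sqrt{-1}}{2}u_0$ and use the commutation rule for $u_{\alpha,\overline\beta\overline\gamma}$ to get $f_{\overline\gamma}=\sqrt{-1}A^\sigma_{\overline\gamma}u_\sigma$, which rewrites as (\ref{au}) and then (\ref{u0a}); similarly trace and differentiate to get (\ref{u00}). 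Step three: differentiate $u_{\alpha,\beta}=0$ in the barred direction, commute, and trace to obtain (\ref{ra}); taking the real pairing with $u_{\overline\alpha}$ and using (\ref{c-hyp}) with $X=e^{it}u_{\overline\alpha}T_\alpha$ shows $Q\le 0$. Step four: the critical-set analysis (Lemma \ref{Haus}) uses only the real and imaginary parts of $u_{\alpha,\overline\beta}=f\delta_{\alpha\beta}$ together with the elliptic equation $\Delta u = -(\tfrac m2+\tfrac14)u+\tfrac4m\Im A_{\alpha\sigma,\overline\alpha}u_{\overline\sigma}$ coming from $\Delta_b$ and $u_{0,0}$; and Lemma \ref{re0} is an integration-by-parts identity built from Lemma \ref{intfor} applied to $v=Tu$, legitimized because $K$ has zero $2$-capacity. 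Step five: feed all of this into the iteration argument of Section 5 — choose a regular value $\varepsilon^2$ of $\psi^2$, build the test function $F=\psi(\psi-\varepsilon)_+^2$, integrate by parts twice using (\ref{u20}) and Lemma \ref{re0} to reduce $\int Fu^{2k}|\partial_b u|^2$ to a term controlled by $\|\mathrm{div}\,A\|_\infty$, then apply Hölder and absorb for $k$ large to force $\int Fu^{2k}|\partial_b u|^2=0$, a contradiction unless $\psi\equiv 0$, i.e. $A\equiv 0$.

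Once $A=0$, the equations reduce to (\ref{a0})--(\ref{u11}) with $A$ suppressed, so by Proposition \ref{hessian} the Riemannian Hessian satisfies $D^2u=-\tfrac14 u\,g_\theta$. Since $u$ is non-constant, Obata's theorem (Theorem \ref{OT}) identifies $(M,g_\theta)$ isometrically with $(\mathbb{S}^{2m+1},g_0)$ where $g_0=4g_c$. Then $\theta$ is a pseudohermitian structure on $\mathbb{S}^{2m+1}$ with adapted metric $g_0$ and torsion-free Webster connection, so by the first formula of Remark \ref{one} the Reeb field $T$ is Killing for $g_0$; writing $T(X)=AX$ for a skew-symmetric $A$, diagonalizing into $2\times2$ blocks with entries $a_i$, and imposing $|T|_{g_0}\equiv 1$ forces every $a_i=\tfrac12$, whence $T=\sum_i \tfrac12(y_i\partial_{x_i}-x_i\partial_{y_i})$ and $\theta=g_0(T,\cdot)=2\sqrt{-1}\,\overline\partial(|z|^2-1)$. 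This gives the claimed CR equivalence.

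The main obstacle is Step five, the delicate integration-by-parts/iteration argument that forces the torsion to vanish in the presence of a possibly nonzero divergence $\mathrm{div}\,A$. Everything upstream is a mechanical (if lengthy) application of the commutation formulas; everything downstream is the classical Obata rigidity plus a short linear-algebra computation. But to run the iteration one must (i) know that the critical set $K=\{\partial_b u=0\}$ is small enough — Hausdorff dimension $\le n-2$, hence zero $2$-capacity — so that cutting off near $K$ introduces vanishing error, and (ii) exploit the precise structure $A_{\alpha\beta}=\sqrt{-1}\psi\, u_\alpha u_\beta/|\partial_b u|^2$ together with $\Re\sum\psi_\alpha u_{\overline\alpha}=0$ to kill all the ``bad'' boundary and gradient terms, leaving only a term linear in $\mathrm{div}\,A$ that Hölder plus the identity (\ref{fu}) can absorb. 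Getting (i) requires the elliptic nodal-set estimate of \cite{HHL} for $K_2$ and an implicit-function-theorem argument for $K_1$; getting (ii) is exactly where Lemma \ref{re0} does its work, and that lemma in turn rests on applying the Bochner machinery not to $u$ but to $v=Tu$. I would present Section 5's computation essentially as written, since the cancellations are too tight to paraphrase safely.
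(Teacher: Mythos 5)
Your proposal is correct and follows the paper's own proof verbatim: the paper proves Theorem \ref{rig} precisely by observing that the arguments of Sections 4 and 5 use only (\ref{u20}) and (\ref{u11}) as inputs, deriving (\ref{torc}), (\ref{u0a}), (\ref{u00}), (\ref{ra}), Lemmas \ref{Haus} and \ref{re0}, and the iteration killing the torsion, before concluding with $D^2u=-\tfrac14 u\,g_\theta$, Obata's theorem, and the identification of the Reeb field. The only slip is your appeal to (\ref{c-hyp}) to get $Q\le 0$ — that hypothesis is not available in Theorem \ref{rig} — but as Remark \ref{aQ} notes, the nonpositivity of $Q$ is not needed for the rigidity argument, so this does not affect the proof.
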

This is equivalent to Theorem \ref{crot} by scaling.
\section{Remarks for the case $2m+1=3$}

Generally speaking, $3$-dimensional CR manifolds are more subtle  to understand than higher dimensional ones. A famous example is the CR embedding problem.
In our situation, it is not clear if Theorem \ref{rig} is true in 3 dimensions. The reason is that (\ref{u0a}) does not follow from (\ref{u20}) and 
(\ref{u11}) in 3 dimensions (In deriving (\ref{au}) we need at least 2 indices). The arguments
in Section 5 do yield the following weaker rigidity theorem in dimension $3$  with (\ref{u0a}) as an extra condition.

\begin{theorem}\label{rig3}
Let $M$ be a $3$-dimensional closed pseudohermitian manifold. Suppose
there exists a non-constant function $u\in C^{\infty }\left( M\right) $
satisfying 
\begin{eqnarray*}
u_{1,1 } &=&0, \\
u_{1 ,\overline{1 }} &=&\left( -\frac{1}{4}u+\frac{\sqrt{-1}}{2}%
u_{0}\right), \\
u_{0,1} &=&2A_{11 }u_{\overline{1 }}+\frac{\sqrt{-1}}{2%
}u_{1 }.
\end{eqnarray*}%
Then $M$ is equivalent to the sphere $(\mathbb{S}^{3},2\sqrt{-1}\overline{\partial}(|z|^2-1))$.
\end{theorem}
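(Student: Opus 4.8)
The plan is to reduce Theorem \ref{rig3} to the argument already carried out for Theorem \ref{rig}, checking which steps genuinely needed more than one complex index and supplying the missing input by the extra hypothesis $u_{0,1}=2A_{11}u_{\overline 1}+\tfrac{\sqrt{-1}}{2}u_1$. Recall that in dimension $3$ the only nontrivial torsion component is $A_{11}$ and the Levi form reduces to $h_{1\overline 1}=1$ in a unitary frame. First I would rederive, exactly as in Proposition \ref{crhessian}, the consequences of the two Hessian equations: differentiating $u_{1,1}=0$ and using the last identity of Proposition \ref{rule} gives $A_{11}u_1-A_{11}u_1=0$, which is vacuous, but on the set $\{\partial_b u\neq 0\}$ we still get the tautology $\sqrt{-1}A_{11}=\frac{Q}{|\partial_b u|^4}u_1u_1$ with $Q=\sqrt{-1}A_{11}u_{\overline 1}u_{\overline 1}$ simply by definition. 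Next, differentiating $u_{1,\overline 1}=f$ with $f=-\tfrac14 u+\tfrac{\sqrt{-1}}{2}u_0$ and using the commutation rule $u_{1,\overline 1\overline 1}=u_{1,\overline1\overline1}$ (which in one index is automatic) no longer yields $f_{\overline 1}=\sqrt{-1}A_{\overline 1}^{\sigma}u_\sigma$; this is precisely where the argument broke, and it is exactly the content of the assumed third equation, since $u_{0,1}=u_{1,0}+A_1^{\overline 1}u_{\overline 1}$ rearranges to $\sqrt{-1}A_{11}u_{\overline1}=\tfrac12 u_1+\sqrt{-1}u_{1,0}$, i.e. (\ref{au}) with $m$ arbitrary. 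So with the hypothesis in hand, (\ref{u0a}) holds by assumption and (\ref{u00}) follows by tracing (\ref{u11}), differentiating, and taking the imaginary part exactly as before.

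With (\ref{u20}), (\ref{u11}), (\ref{u0a}), (\ref{u00}) and (\ref{torc}) all in place, I would then rerun Section 5 verbatim. The curvature identity (\ref{ra}) is needed; in dimension $3$ one obtains it either from the differentiated equation $u_{1,1\overline1}=0$ combined with $u_{1,1\overline1}=u_{1,\overline11}+\sqrt{-1}u_{1,0}-R_{1\overline11}^{\ 1}u_1$ (using the seventh identity of Proposition \ref{rule}), which after substituting (\ref{au}) gives $R_{1\overline1}u_1+2\sqrt{-1}A_{11}u_{\overline1}=u_1$, the $m=1$ case of (\ref{ra}). The lemma that $Q$ is real and nonpositive then follows formally the same way, using that the relevant quadratic form evaluated on $e^{it}u_{\overline1}T_1$ is bounded below; here one should note that in dimension $3$ the genuine pseudoconvexity/eigenvalue hypothesis is not assumed, but $Q\le 0$ is only used to conclude $A\equiv 0$ in Remark \ref{aQ}, which we do not invoke. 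The substantive part, Lemma \ref{mainlem}, is a pure integration-by-parts argument: $\psi=|A_{11}|$, $K=\{\partial_b u=0\}$, the Hausdorff-dimension estimate of Lemma \ref{Haus} (whose proof uses only (\ref{u11}) and the elliptic equation $\Delta u=-(\tfrac m2+\tfrac14)u+\tfrac4m\Im A_{1 1,\overline1}u_{\overline1}$, both available), Lemma \ref{re0} (which uses only Proposition \ref{crhessian}), and finally the iteration on $\int_M F u^{2k}|\partial_b u|^2$ driven by a bound on $\mathrm{div}\,A$. None of these steps uses the number of indices, so they transfer directly. The conclusion $A\equiv 0$ puts us in the torsion-free situation, and the concluding argument of Section 4 (Obata's theorem applied to $D^2u=-\tfrac14 u\,g_\theta$, then identifying the Reeb field as a unit Killing field on the round $\mathbb S^3$) identifies $(M,\theta)$ with $(\mathbb S^3,2\sqrt{-1}\dbar(|z|^2-1))$.

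I would organize the write-up as: (i) a remark that the hypothesis is exactly (\ref{au}) for $m=1$, hence (\ref{u0a}); (ii) the derivation of (\ref{u00}) and (\ref{torc}) as in Proposition \ref{crhessian}; (iii) the derivation of (\ref{ra}) for $m=1$; (iv) a statement that Lemmas \ref{Haus}, \ref{re0}, \ref{mainlem} and their proofs apply with no change since they never use $m\ge 2$; (v) the final identification.

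The main obstacle — and the reason this is only a \emph{weaker} theorem — is that one cannot remove the third hypothesis: in dimension $3$ the equations $u_{1,1}=0$ and $u_{1,\overline1}=f\delta$ impose essentially no constraint linking $f_{\overline1}$ to the torsion, so (\ref{au}) is a genuinely independent piece of data. Within the proof itself there is no real obstacle; the only point demanding a little care is verifying that the two integration-by-parts lemmas (Lemma \ref{re0} and the iteration in Lemma \ref{mainlem}) are justified across the bad set $K$ — but this is handled exactly as in the Remark following Lemma \ref{re0}, using that $K$ has zero $2$-capacity because $\mathcal H^{1}(K)<\infty$ when $n=3$ (indeed $n-2=1$), so the cutoff approximation $\chi_k\to 1$ in $W^{1,2}(M)$ still exists. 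Hence the same proof goes through and Theorem \ref{rig3} follows.
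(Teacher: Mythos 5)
Your proposal is correct and follows essentially the same route as the paper, which proves Theorem \ref{rig3} precisely by observing that the extra hypothesis is the identity (\ref{au})/(\ref{u0a}) --- the only step in Proposition \ref{crhessian} that genuinely required two indices --- that (\ref{torc}) is vacuous for $m=1$, and that the arguments of Sections 4 and 5 then apply verbatim. The one imprecise point is your remark on $Q$: its nonpositivity does not ``follow the same way'' since no curvature hypothesis is assumed in Theorem \ref{rig3}, but realness of $Q$ does follow from (\ref{ra}), and the sign enters Section 5 only through the normalization $\psi=-Q/\left\vert \partial_b u\right\vert^{2}$, a harmless bookkeeping issue already present in the paper's own Theorem \ref{rig}.
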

 
In fact, the eigenvalue estimate (Theorems 3) is not known in the $3$-dimensional case without
any extra condition. Chang and Chiu in \cite{CC2} proved the estimate under the extra condition that the 
Panietz operator is nonnegative. They also proved that $M$ is CR equivalent to the sphere if equality holds and the torsion is zero.

Recall that the Panietz operator $P_0$  acting on functions on a pseudohermitian manifold $M$ of dimension $2m+1$ is defined by
\begin{equation*}
P_{0}u=4\Re\left( u_{\overline{\beta },\beta \alpha }+m\sqrt{-1}%
A_{\alpha \beta }u_{\overline{\beta }}\right) _{,\overline{\alpha} }.
\end{equation*}%
It is proved by Graham and Lee \cite{GL} that $P_0$ is always nonnegative if $M$ is closed and of dimension $\ge 5$ in the sense
$$\int_MuP_0u\ge 0$$
for any smooth function $u$. In 3 dimensions, $P_0$ is known to be nonnegative if the torsion is zero.

With our method, we can remove the torsion-free condition in the characterization of the equality case in Chang and Chiu's work. 
\begin{theorem} Let $(M^3,\theta)$ be a closed pseudohermitian manifold such that for any $X=c T_1$
\begin{equation}\label{cv3}
R_{1\1bar}|c|^2-\sqrt{-1}(A_{11}\overline{c}^2-A_{\1bar\1bar}c^2)\ge |c|^2.
\end{equation}
If  $P_0\ge 0$, then  $\lambda_1\ge \frac{1}{2}$ and the equality holds if and only if
$(M^3, \theta)$ is equivalent to $(\mathbb{S}^{3},2\sqrt{-1}\overline{\partial}(|z|^2-1))$.
\end{theorem}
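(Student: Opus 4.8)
\emph{Overview.} The statement has two halves: the sharp bound $\lambda_{1}\geq 1/2$, and the characterization of equality --- the latter being the genuinely new point, since here the Tanaka--Webster torsion need not vanish. I would prove the bound by the method of Chang and Chiu \cite{CC2} (the only place $P_{0}\geq 0$ is used), and then push the equality analysis far enough that $u$ satisfies the overdetermined system of Theorem \ref{rig3}, which already supplies the rigidity; the converse is an easy computation on the sphere. By scaling I normalize $\kappa=1$ in \eqref{cv3}, so $m=1$ and at equality $\lambda_{1}=1/2$.

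\emph{The bound.} Let $-\Delta_{b}u=\lambda_{1}u$ and integrate the \emph{corrected} Bochner formula of Theorem \ref{Boc} (for $m=1$ the erroneous coefficient $(m-2)/2$ of \cite{G} becomes $m/2=1/2$). In dimension $\geq 5$ the term $\sqrt{-1}(u_{\overline{\beta}}u_{\beta,0}-u_{\beta}u_{\overline{\beta},0})$ was absorbed using the positive coefficient $2(m-1)/(2m+1)$ of $\int|u_{\alpha,\beta}|^{2}$ in the proof of Theorem \ref{GL}; in dimension $3$ this coefficient vanishes, so the classical Bochner argument degenerates and one must bring in the CR Paneitz operator. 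Integrating by parts with Proposition \ref{rule}, $\int_{M}uP_{0}u$ becomes a linear combination of $\int|u_{1,1}|^{2}$, the torsion integral $\sqrt{-1}\int(A_{11}u_{\overline{1}}^{2}-A_{\overline{1}\overline{1}}u_{1}^{2})$, $\int R_{1\overline{1}}|u_{1}|^{2}$ and $\int u_{0}^{2}$; feeding this and the identities of Lemma \ref{intfor} (case $m=1$) into the integrated Bochner identity and taking the right combination leaves an inequality of the form $0\geq c_{1}\int_{M}|u_{1,1}|^{2}+c_{2}\int_{M}uP_{0}u+c_{3}\int_{M}\mathcal{E}+c_{4}(1-2\lambda_{1})\int_{M}|\partial_{b}u|^{2}$ with all $c_{i}>0$, where $\mathcal{E}=R_{1\overline{1}}|u_{1}|^{2}-\sqrt{-1}(A_{11}u_{\overline{1}}^{2}-A_{\overline{1}\overline{1}}u_{1}^{2})-|u_{1}|^{2}\geq 0$ is the pointwise excess in \eqref{cv3} at $c=u_{1}$. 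Since $P_{0}\geq 0$ and $\mathcal{E}\geq 0$, this forces $\lambda_{1}\geq 1/2$, and at equality it forces $u_{1,1}=0$, $P_{0}u=0$, and $\mathcal{E}\equiv 0$.

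\emph{Equality implies sphere.} Suppose $\lambda_{1}=1/2$, with eigenfunction $u$. From $u_{1,\overline{1}}=u_{\overline{1},1}+\sqrt{-1}u_{0}$ (Proposition \ref{rule}) and $\overline{u_{1,\overline{1}}}=u_{\overline{1},1}$ one has $\Im u_{1,\overline{1}}=\tfrac12u_{0}$, while $\Re u_{1,\overline{1}}=\tfrac12\Delta_{b}u=-\tfrac14u$; hence $u_{1,\overline{1}}=-\tfrac14u+\tfrac{\sqrt{-1}}{2}u_{0}$ --- the second equation of Theorem \ref{rig3}, valid for \emph{any} such eigenfunction. Equality gives $u_{1,1}=0$, the first equation. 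Differentiating $u_{1,1}=0$ and commuting by Proposition \ref{rule} yields a first curvature relation $R_{1\overline{1}}u_{1}=-\tfrac14u_{1}+\tfrac{\sqrt{-1}}{2}u_{0,1}+\sqrt{-1}u_{1,0}$. On the other hand, where $\partial_{b}u\neq 0$ the left side of \eqref{cv3} is a nonnegative real quadratic form in $c\in\mathbb{C}$ vanishing (by $\mathcal{E}=0$) at $c=u_{1}\neq 0$, so $u_{1}$ lies in its kernel; reading off the kernel condition gives the $m=1$ case of \eqref{ra}. Subtracting the two curvature relations and eliminating $u_{1,0}$ via $u_{0,1}=u_{1,0}+A_{11}u_{\overline{1}}$ solves for $u_{0,1}$ and produces exactly the third equation of Theorem \ref{rig3}, $u_{0,1}=2A_{11}u_{\overline{1}}+\tfrac{\sqrt{-1}}{2}u_{1}$ (on $\{\partial_{b}u\neq 0\}$, hence by continuity and trivially elsewhere). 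Consistently, $P_{0}u=0$ together with the first two equations reduces to the $m=1$ form of \eqref{u00}. Thus $u$ satisfies all hypotheses of Theorem \ref{rig3}, so $(M^{3},\theta)$ is CR equivalent to $(\mathbb{S}^{3},2\sqrt{-1}\overline{\partial}(|z|^{2}-1))$.

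\emph{Converse, and the main difficulty.} On $(\mathbb{S}^{3},2\sqrt{-1}\overline{\partial}(|z|^{2}-1))$ the Tanaka--Webster torsion vanishes, so $P_{0}\geq 0$ and \eqref{cv3} holds with equality ($R_{1\overline{1}}=1$ in a unitary frame), and $\lambda_{1}=1/2$, verified exactly as the sharpness assertion following Theorem \ref{Greenleaf} (the first eigenspace being spanned by restrictions of the linear coordinate functions). So equality is attained, completing the characterization. The principal obstacle is the bound itself: in dimension $3$ one must isolate the precise Paneitz identity and the exact linear combination that simultaneously keeps all $c_{i}>0$ and cancels the spurious $\int u_{0}^{2}$ contributions, since only then can the equality case be read off cleanly. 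A secondary technical point --- justifying the integrations by parts involving $u_{0}=Tu$ near the critical set $\{\partial_{b}u=0\}$ --- is already absorbed in the proof of Theorem \ref{rig3}, because Lemma \ref{Haus} and the $2$-capacity argument of Section 5 go through verbatim for $n=3$.
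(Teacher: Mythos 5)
Your proposal is correct and follows essentially the same route as the paper: the bound comes from combining the Bochner formula with the Paneitz identity (Lemma 2.2 of \cite{CC1}) and the integral formulas of Lemma \ref{intfor}, and at equality one extracts $u_{1,1}=0$, $u_{1,\overline{1}}=-\tfrac14 u+\tfrac{\sqrt{-1}}{2}u_{0}$, and the pointwise vanishing of the excess in (\ref{cv3}), whose first-variation (your ``kernel'') condition $R_{1\overline{1}}u_{1}+2\sqrt{-1}A_{11}u_{\overline{1}}=u_{1}$, combined with commuting derivatives of $u_{1,1}=0$ and $u_{0,1}=u_{1,0}+A_{11}u_{\overline{1}}$, yields $u_{0,1}=2A_{11}u_{\overline{1}}+\tfrac{\sqrt{-1}}{2}u_{1}$, so Theorem \ref{rig3} applies. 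The only differences are presentational: the paper reaches the first-variation identity by an explicit phase rotation followed by differentiating $t\mapsto Q(u_{1}+tX)$ at $t=0$ (equivalent to your observation that a zero of a nonnegative quadratic form lies in its kernel), and it writes out the explicit linear combination for the eigenvalue bound that you only assert exists.
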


\begin{remark} The first part of the theorem was proved by Chiu \cite{Ch} and the second part of the theorem was proved by Chang and Chiu \cite{CC2} under the extra condition
that $M$ is torsion free. 
\end{remark}

We sketch the proof here. By Lemma 2.2 in \cite{CC1}, one has
$$
\int_M u_0^2 =\int_M (\Delta_b u)^2-2\sqrt{-1}(A_{11}u_{\1bar}^2-A_{\1bar \1bar} u_1^2)]
-{1\over 2} \int_M u P_0 u.
$$
Using Proposition 3 and integrating by parts yields
\begin{eqnarray*}
\Re \Big[ \sqrt{-1} \int_M (u_{\1bar} u_{1,  0}-u_{1} u_{\1bar, 0}) \Big]
= - \int_M (u_{0})^2-\int_M \sqrt{-1}(A_{11} u_{\1bar}^2- A_{\1bar \1bar} u_{1}^2).\\
\end{eqnarray*}
Let $u$ be a non-zero first eigenfunction, 
$\Delta_b u=-\lambda_1 u.$
Then
$$
\int_M (\Delta_b u)^2=2 \lambda_1 \int_M |\d_b u|^2.
$$
By the Bochner formula (Theorem \ref{Boc}), Lemma \ref{intfor} and the above two identities, we have
\begin{eqnarray*}
0&=& \int_M |u_{1,1}|^2+|u_{1, \1bar}|^2 -\lambda_1|\d u|^2
+ R_{1 \1bar} u_{1} u_{\1bar} +{1\over 2} \sqrt{-1} [A_{1 1} u_{\1bar}^2 -A_{\1bar \1bar} u_1^2]\\
&&\quad +\Re \sqrt{-1} \int_M (u_{\1bar} u_{1, 0}-u_{1} u_{\1bar, 0}) \\
&=& \int_M |u_{1,1}|^2+{\lambda_1^2 u^2 \over 4}+{u_0^2 \over 4} -\lambda_1|\d u|^2
+ R_{1 \1bar} |u_1|^2 +{1\over 2} \sqrt{-1} [A_{1 1} u_{\1bar}^2 -A_{\1bar \1bar} u_{1}^2 ]\\
&&\quad - \int_M (u_{0})^2-\int_M \sqrt{-1}( A_{11} u_{\1bar}^2 -A_{\1bar \1bar} u_1^2) \\
&=& \int_M |u_{1,1}|^2 -{\lambda_1\over 2} |\d u|^2
+ R_{1 \1bar} |u_1|^2-{1\over 2} \sqrt{-1} [A_{11} u_{\1bar} ^2 -A_{\1bar \1bar} u_1^2] -{3\over 4} \int_M (u_{0})^2 \\
&=& \int_M |u_{1, 1}|^2 -{\lambda_1\over 2} |\d u|^2
+ R_{1 \1bar} |u_1|^2  -{1\over 2} \sqrt{-1} [A_{11} u_{\1bar}^2 -A_{\1bar \1bar} u_1^2] \\
&&-{3\over 4} \int_M [2 \lambda_1 |\d u|^2 -2 \sqrt{-1}  [A_{11} u_{\1bar}^2 -A_{\1bar \1bar} u_1^2] 
-{1\over 2} u P_0 u ] \\
\\
&=& \int_M |u_{1,1}|^2  +{3\over 2} u P_0 u+\int_M[-2 \lambda_1  |\d u|^2
+ R_{1 \1bar} |u_1|^2 + \sqrt{-1} [A_{11} u_{\1bar}^2 -A_{\1bar \1bar} u_1^2] \\
&\ge & \int_M[-2 \lambda_1  |\d u|^2
+ |\d u|^2] \\
\end{eqnarray*}
This implies that
$
\lambda_1\ge \frac{1}{2}
$.
If equality holds, we must have
\begin{equation}
u_{1,1}=0,\quad u_{1,\1bar}=-{u\over 4}+{\sqrt{-1}\over 2} u_0 \label{eq1}
\end{equation}
\begin{equation}
R_{1\1bar} |u_1|^2+\sqrt{-1} (A_{11} u_{\1bar}^2-A_{\1bar\1bar} u_1^2)=|u_1|^2.\label{eq2}
\end{equation}
Writing $A_{11}=e^{\sqrt{-1}\theta_1} |A_{11} |$, $u_1^2=e^{\sqrt{-1} \theta_2}|u_1|^2$ and $X=e^{\sqrt{-1}\theta} |X|$,  by (\ref{cv3}), we have
\begin{equation}
R_{1\1bar} |X|^2+\sqrt{-1} |A_{\1bar\1bar}| |X|^2(e^{\sqrt{-1} (\theta_1+\theta_2+\theta)}-e^{-\sqrt{-1} (\theta_1+\theta_2+\theta)})|\ge |X|^2,\label{eq3}
\end{equation}
for all $\theta\in [0,2\pi)$. Choosing $\theta=-\theta_1+{\pi\over 2}$, one has
\begin{equation}
R_{1\1bar} |X|^2-2|A_{11}| |X|^2 \ge |X|^2.\label{eq4}
\end{equation}
Therefore, by comparing  (\ref{eq2}) and (\ref{eq4})  with $X=u_1T_{\1bar}$, 
$$
\sqrt{-1}(A_{11} u_{\1bar}^2-A_{\1bar\1bar} u_1^2)=-2|A_{11}| |u_1|^2,\quad A_{11}=\sqrt{-1}|A_{11}| {u_1^2\over |u_1|^2}.
$$
Notice that
$$
R_{1\1bar} |u_1+tX|^2-2 |A_{11}||u_1+t X|^2-|u_1+tX|^2\ge 0,\quad \hbox{ on } \ M,
$$
and equality holds at $t=0$. Therefore we obtain by differentiating at $t=0$ 
$$ R_{1\1bar} u_1+2\sqrt{-1} A_{11} u_{\1bar} =u_1. $$
Using the 7th formula of Proposition \ref{rule}, we have
\begin{eqnarray*}
{-u_1\over 4}+{\sqrt{-1}\over 2} u_{0,1}&=& u_{1,\1bar 1}-u_{1,1 \1bar}\\
&=&-\sqrt{-1} u_{1,0} -R_{1\1bar} u_1\\
&=&-\sqrt{-1} u_{0,1} +\sqrt{-1} A_{11}u_{\1bar}-R_{1\1bar} u_1\\
&=&-\sqrt{-1} u_{0,1} +3\sqrt{-1} A_{11}u_{\1bar}-u_1
\end{eqnarray*}
Therefore,
$$
{1\over 2} u_{1} +\sqrt{-1} u_{0,1}=2\sqrt{-1} A_{11} u_{\1bar},\quad u_{0,1}={\sqrt{-1} \over 2} u_1+2 A_{11} u_{\1bar}. 
$$
Applying Theorem 9, the proof of Theorem 10 is complete.

\bigskip

\end{document}